\documentclass[reqno]{amsart}
\usepackage{amssymb}
\usepackage{eucal}
\usepackage{amsmath}
\usepackage{amscd}
\usepackage[dvips]{color}
\usepackage{multicol}
\usepackage[all]{xy}           
\usepackage{graphicx}
\usepackage{color}
\usepackage{colordvi}
\usepackage{xspace}

\usepackage[active]{srcltx} 

\usepackage{lipsum}

\topmargin -.8cm \textheight 22.5cm \oddsidemargin 0cm
\evensidemargin -0cm \textwidth 16cm



\newcommand{\nc}{\newcommand}
\newcommand{\delete}[1]{}
\nc{\mfootnote}[1]{\footnote{#1}} 
\nc{\todo}[1]{\tred{To do:} #1}

\delete{
\nc{\mlabel}[1]{\label{#1}}  
\nc{\mcite}[1]{\cite{#1}}  
\nc{\mref}[1]{\ref{#1}}  
\nc{\mbibitem}[1]{\bibitem{#1}} 
}

\nc{\mlabel}[1]{\label{#1}  
{\hfill \hspace{1cm}{\bf{{\ }\hfill(#1)}}}}
\nc{\mcite}[1]{\cite{#1}{{\bf{{\ }(#1)}}}}  
\nc{\mref}[1]{\ref{#1}{{\bf{{\ }(#1)}}}}  
\nc{\mbibitem}[1]{\bibitem[\bf #1]{#1}} 

\newtheorem{theorem}{Theorem}[section]
\newtheorem{definition}[theorem]{Definition}
\newtheorem{lemma}[theorem]{Lemma}

\newtheorem{prop-def}[theorem]{Proposition-Definition}

\newtheorem{remark}[theorem]{Remark}

\newtheorem{example}[theorem]{Example}

\newtheorem{proposition}[theorem]{Proposition}

\nc{\tred}[1]{\textcolor{red}{#1}}
\nc{\tblue}[1]{\textcolor{blue}{#1}}
\nc{\tgreen}[1]{\textcolor{green}{#1}}
\nc{\tpurple}[1]{\textcolor{purple}{#1}}
\nc{\btred}[1]{\textcolor{red}{\bf #1}}
\nc{\btblue}[1]{\textcolor{blue}{\bf #1}}
\nc{\btgreen}[1]{\textcolor{green}{\bf #1}}
\nc{\btpurple}[1]{\textcolor{purple}{\bf #1}}

\nc{\li}[1]{\textcolor{red}{Xiaomin:#1}}
\nc{\cm}[1]{\textcolor{blue}{Chengming: #1}}


\nc{\twovec}[2]{\left(\begin{array}{c} #1 \\ #2\end{array} \right )}
\nc{\threevec}[3]{\left(\begin{array}{c} #1 \\ #2 \\ #3 \end{array}\right )}
\nc{\twomatrix}[4]{\left(\begin{array}{cc} #1 & #2\\ #3 & #4 \end{array} \right)}
\nc{\threematrix}[9]{{\left(\begin{matrix} #1 & #2 & #3\\ #4 & #5 & #6 \\ #7 & #8 & #9 \end{matrix} \right)}}
\nc{\twodet}[4]{\left|\begin{array}{cc} #1 & #2\\ #3 & #4 \end{array} \right|}

\nc{\rk}{\mathrm{r}}


\nc{\gensp}{V} 
\nc{\relsp}{\Lambda} 
\nc{\leafsp}{X}    
\nc{\treesp}{\overline{\calt}} 

\nc{\vin}{{\mathrm Vin}}    
\nc{\lin}{{\mathrm Lin}}    

\nc{\gop}{{\,\omega\,}}     
\nc{\gopb}{{\,\nu\,}}
\nc{\svec}[2]{{\tiny\left(\begin{matrix}#1\\
#2\end{matrix}\right)\,}}  
\nc{\ssvec}[2]{{\tiny\left(\begin{matrix}#1\\
#2\end{matrix}\right)\,}} 

\nc{\su}{\mathrm{Su}}
\nc{\tsu}{\mathrm{TSu}}
\nc{\TSu}{\mathrm{TSu}}
\nc{\eval}[1]{{#1}_{\big|D}}
\nc{\oto}{\leftrightarrow}

\nc{\oaset}{\mathbf{O}^{\rm alg}}
\nc{\omset}{\mathbf{O}^{\rm mod}}
\nc{\oamap}{\Phi^{\rm alg}}
\nc{\ommap}{\Phi^{\rm mod}}
\nc{\ioaset}{\mathbf{IO}^{\rm alg}}
\nc{\iomset}{\mathbf{IO}^{\rm mod}}
\nc{\ioamap}{\Psi^{\rm alg}}
\nc{\iommap}{\Psi^{\rm mod}}

\nc{\suc}{{successor}\xspace} \nc{\Suc}{{Successor}\xspace}
\nc{\sucs}{{successors}\xspace} \nc{\Sucs}{{Successors}\xspace}
\nc{\Tsuc}{{T-successor}\xspace}
\nc{\Tsucs}{{T-successors}\xspace} \nc{\Lsuc}{{L-successor}\xspace}
\nc{\Lsucs}{{L-successors}\xspace} \nc{\Rsuc}{{R-successor}\xspace}
\nc{\Rsucs}{{R-successors}\xspace}

\nc{\bia}{{$\mathcal{P}$-bimodule ${\bf k}$-algebra}\xspace}
\nc{\bias}{{$\mathcal{P}$-bimodule ${\bf k}$-algebras}\xspace}

\nc{\rmi}{{\mathrm{I}}}
\nc{\rmii}{{\mathrm{II}}}
\nc{\rmiii}{{\mathrm{III}}}

\nc{\pll}{\beta}
\nc{\plc}{\epsilon}

\nc{\ass}{{\mathit{Ass}}}
\nc{\lie}{{\mathit{Lie}}}
\nc{\comm}{{\mathit{Comm}}}
\nc{\dend}{{\mathit{Dend}}}
\nc{\zinb}{{\mathit{Zinb}}}
\nc{\tdend}{{\mathit{TDend}}}
\nc{\prelie}{{\mathit{preLie}}}
\nc{\postlie}{{\mathit{PostLie}}}
\nc{\quado}{{\mathit{Quad}}}
\nc{\octo}{{\mathit{Octo}}}
\nc{\ldend}{{\mathit{ldend}}}
\nc{\lquad}{{\mathit{LQuad}}}

 \nc{\adec}{\check{;}} \nc{\aop}{\alpha}
\nc{\dftimes}{\widetilde{\otimes}} \nc{\dfl}{\succ} \nc{\dfr}{\prec}
\nc{\dfc}{\circ} \nc{\dfb}{\bullet} \nc{\dft}{\star}
\nc{\dfcf}{{\mathbf k}} \nc{\apr}{\ast} \nc{\spr}{\cdot}
\nc{\twopr}{\circ} \nc{\tspr}{\star} \nc{\sempr}{\ast}
\nc{\disp}[1]{\displaystyle{#1}}
\nc{\bin}[2]{ (_{\stackrel{\scs{#1}}{\scs{#2}}})}  
\nc{\binc}[2]{ \left (\!\! \begin{array}{c} \scs{#1}\\
    \scs{#2} \end{array}\!\! \right )}  
\nc{\bincc}[2]{  \left ( {\scs{#1} \atop
    \vspace{-.5cm}\scs{#2}} \right )}  
\nc{\sarray}[2]{\begin{array}{c}#1 \vspace{.1cm}\\ \hline
    \vspace{-.35cm} \\ #2 \end{array}}
\nc{\bs}{\bar{S}} \nc{\dcup}{\stackrel{\bullet}{\cup}}
\nc{\dbigcup}{\stackrel{\bullet}{\bigcup}} \nc{\etree}{\big |}
\nc{\la}{\longrightarrow} \nc{\fe}{\'{e}} \nc{\rar}{\rightarrow}
\nc{\dar}{\downarrow} \nc{\dap}[1]{\downarrow
\rlap{$\scriptstyle{#1}$}} \nc{\uap}[1]{\uparrow
\rlap{$\scriptstyle{#1}$}} \nc{\defeq}{\stackrel{\rm def}{=}}
\nc{\dis}[1]{\displaystyle{#1}} \nc{\dotcup}{\,
\displaystyle{\bigcup^\bullet}\ } \nc{\sdotcup}{\tiny{
\displaystyle{\bigcup^\bullet}\ }} \nc{\hcm}{\ \hat{,}\ }
\nc{\hcirc}{\hat{\circ}} \nc{\hts}{\hat{\shpr}}
\nc{\lts}{\stackrel{\leftarrow}{\shpr}}
\nc{\rts}{\stackrel{\rightarrow}{\shpr}} \nc{\lleft}{[}
\nc{\lright}{]} \nc{\uni}[1]{\tilde{#1}} \nc{\wor}[1]{\check{#1}}
\nc{\free}[1]{\bar{#1}} \nc{\den}[1]{\check{#1}} \nc{\lrpa}{\wr}
\nc{\curlyl}{\left \{ \begin{array}{c} {} \\ {} \end{array}
    \right .  \!\!\!\!\!\!\!}
\nc{\curlyr}{ \!\!\!\!\!\!\!
    \left . \begin{array}{c} {} \\ {} \end{array}
    \right \} }
\nc{\leaf}{\ell}       
\nc{\longmid}{\left | \begin{array}{c} {} \\ {} \end{array}
    \right . \!\!\!\!\!\!\!}
\nc{\ot}{\otimes} \nc{\sot}{{\scriptstyle{\ot}}}
\nc{\otm}{\overline{\ot}}
\nc{\ora}[1]{\stackrel{#1}{\rar}}
\nc{\ola}[1]{\stackrel{#1}{\la}}
\nc{\pltree}{\calt^\pl}
\nc{\epltree}{\calt^{\pl,\NC}}
\nc{\rbpltree}{\calt^r}
\nc{\scs}[1]{\scriptstyle{#1}} \nc{\mrm}[1]{{\rm #1}}
\nc{\dirlim}{\displaystyle{\lim_{\longrightarrow}}\,}
\nc{\invlim}{\displaystyle{\lim_{\longleftarrow}}\,}
\nc{\mvp}{\vspace{0.5cm}} \nc{\svp}{\vspace{2cm}}
\nc{\vp}{\vspace{8cm}} \nc{\proofbegin}{\noindent{\bf Proof: }}
\nc{\proofend}{$\blacksquare$ \vspace{0.5cm}}
\nc{\freerbpl}{{F^{\mathrm RBPL}}}
\nc{\sha}{{\mbox{\cyr X}}}  
\nc{\ncsha}{{\mbox{\cyr X}^{\mathrm NC}}} \nc{\ncshao}{{\mbox{\cyr
X}^{\mathrm NC,\,0}}}
\nc{\shpr}{\diamond}    
\nc{\shprm}{\overline{\diamond}}    
\nc{\shpro}{\diamond^0}    
\nc{\shprr}{\diamond^r}     
\nc{\shpra}{\overline{\diamond}^r}
\nc{\shpru}{\check{\diamond}} \nc{\catpr}{\diamond_l}
\nc{\rcatpr}{\diamond_r} \nc{\lapr}{\diamond_a}
\nc{\sqcupm}{\ot}
\nc{\lepr}{\diamond_e} \nc{\vep}{\varepsilon} \nc{\labs}{\mid\!}
\nc{\rabs}{\!\mid} \nc{\hsha}{\widehat{\sha}}
\nc{\lsha}{\stackrel{\leftarrow}{\sha}}
\nc{\rsha}{\stackrel{\rightarrow}{\sha}} \nc{\lc}{\lfloor}
\nc{\rc}{\rfloor}
\nc{\tpr}{\sqcup}
\nc{\nctpr}{\vee}
\nc{\plpr}{\star}
\nc{\rbplpr}{\bar{\plpr}}
\nc{\sqmon}[1]{\langle #1\rangle}
\nc{\forest}{\calf}
\nc{\altx}{\Lambda_X} \nc{\vecT}{\vec{T}} \nc{\onetree}{\bullet}
\nc{\Ao}{\check{A}}
\nc{\seta}{\underline{\Ao}}
\nc{\deltaa}{\overline{\delta}}
\nc{\trho}{\tilde{\rho}}

\nc{\rpr}{\circ}
\nc{\dpr}{{\tiny\diamond}}
\nc{\rprpm}{{\rpr}}

\nc{\mmbox}[1]{\mbox{\ #1\ }} \nc{\ann}{\mrm{ann}}
\nc{\Aut}{\mrm{Aut}} \nc{\can}{\mrm{can}}
\nc{\twoalg}{{two-sided algebra}\xspace}
\nc{\colim}{\mrm{colim}}
\nc{\Cont}{\mrm{Cont}} \nc{\rchar}{\mrm{char}}
\nc{\cok}{\mrm{coker}} \nc{\dtf}{{R-{\rm tf}}} \nc{\dtor}{{R-{\rm
tor}}}

\nc{\depth}{{\mrm d}}
\nc{\Div}{{\mrm Div}} \nc{\End}{\mrm{End}} \nc{\Ext}{\mrm{Ext}}
\nc{\Fil}{\mrm{Fil}} \nc{\Frob}{\mrm{Frob}} \nc{\Gal}{\mrm{Gal}}
\nc{\GL}{\mrm{GL}} \nc{\Hom}{\mrm{Hom}} \nc{\hsr}{\mrm{H}}
\nc{\hpol}{\mrm{HP}} \nc{\id}{\mrm{id}} \nc{\im}{\mrm{im}}
\nc{\incl}{\mrm{incl}} \nc{\length}{\mrm{length}}
\nc{\LR}{\mrm{LR}} \nc{\mchar}{\rm char} \nc{\NC}{\mrm{NC}}
\nc{\mpart}{\mrm{part}} \nc{\pl}{\mrm{PL}}
\nc{\ql}{{\QQ_\ell}} \nc{\qp}{{\QQ_p}}
\nc{\rank}{\mrm{rank}} \nc{\rba}{\rm{RBA }} \nc{\rbas}{\rm{RBAs }}
\nc{\rbpl}{\mrm{RBPL}}
\nc{\rbw}{\rm{RBW }} \nc{\rbws}{\rm{RBWs }} \nc{\rcot}{\mrm{cot}}
\nc{\rest}{\rm{controlled}\xspace}
\nc{\rdef}{\mrm{def}} \nc{\rdiv}{{\rm div}} \nc{\rtf}{{\rm tf}}
\nc{\rtor}{{\rm tor}} \nc{\res}{\mrm{res}} \nc{\SL}{\mrm{SL}}
\nc{\Spec}{\mrm{Spec}} \nc{\tor}{\mrm{tor}} \nc{\Tr}{\mrm{Tr}}
\nc{\mtr}{\mrm{sk}}

\nc{\ab}{\mathbf{Ab}} \nc{\Alg}{\mathbf{Alg}}
\nc{\Algo}{\mathbf{Alg}^0} \nc{\Bax}{\mathbf{Bax}}
\nc{\Baxo}{\mathbf{Bax}^0} \nc{\RB}{\mathbf{RB}}
\nc{\RBo}{\mathbf{RB}^0} \nc{\BRB}{\mathbf{RB}}
\nc{\Dend}{\mathbf{DD}} \nc{\bfk}{{\bf k}} \nc{\bfone}{{\bf 1}}
\nc{\base}[1]{{a_{#1}}} \nc{\detail}{\marginpar{\bf More detail}
    \noindent{\bf Need more detail!}
    \svp}
\nc{\Diff}{\mathbf{Diff}} \nc{\gap}{\marginpar{\bf
Incomplete}\noindent{\bf Incomplete!!}
    \svp}
\nc{\FMod}{\mathbf{FMod}} \nc{\mset}{\mathbf{MSet}}
\nc{\rb}{\mathrm{RB}} \nc{\Int}{\mathbf{Int}}
\nc{\Mon}{\mathbf{Mon}}
\nc{\remarks}{\noindent{\bf Remarks: }}
\nc{\OS}{\mathbf{OS}} 
\nc{\Rep}{\mathbf{Rep}}
\nc{\Rings}{\mathbf{Rings}} \nc{\Sets}{\mathbf{Sets}}
\nc{\DT}{\mathbf{DT}}

\nc{\BA}{{\mathbb A}} \nc{\CC}{{\mathbb C}} \nc{\DD}{{\mathbb D}}
\nc{\EE}{{\mathbb E}} \nc{\FF}{{\mathbb F}} \nc{\GG}{{\mathbb G}}
\nc{\HH}{{\mathbb H}} \nc{\LL}{{\mathbb L}} \nc{\NN}{{\mathbb N}}
\nc{\QQ}{{\mathbb Q}} \nc{\RR}{{\mathbb R}} \nc{\BS}{{\mathbb{S}}} \nc{\TT}{{\mathbb T}}
\nc{\VV}{{\mathbb V}} \nc{\ZZ}{{\mathbb Z}}


\nc{\calao}{{\mathcal A}} \nc{\cala}{{\mathcal A}}
\nc{\calc}{{\mathcal C}} \nc{\cald}{{\mathcal D}}
\nc{\cale}{{\mathcal E}} \nc{\calf}{{\mathcal F}}
\nc{\calfr}{{{\mathcal F}^{\,r}}} \nc{\calfo}{{\mathcal F}^0}
\nc{\calfro}{{\mathcal F}^{\,r,0}} \nc{\oF}{\overline{F}}
\nc{\calg}{{\mathcal G}} \nc{\calh}{{\mathcal H}}
\nc{\cali}{{\mathcal I}} \nc{\calj}{{\mathcal J}}
\nc{\call}{{\mathcal L}} \nc{\calm}{{\mathcal M}}
\nc{\caln}{{\mathcal N}} \nc{\calo}{{\mathcal O}}
\nc{\calp}{{\mathcal P}} \nc{\calq}{{\mathcal Q}} \nc{\calr}{{\mathcal R}}
\nc{\calt}{{\mathcal T}} \nc{\caltr}{{\mathcal T}^{\,r}}
\nc{\calu}{{\mathcal U}} \nc{\calv}{{\mathcal V}}
\nc{\calw}{{\mathcal W}} \nc{\calx}{{\mathcal X}}
\nc{\CA}{\mathcal{A}}

\nc{\fraka}{{\mathfrak a}} \nc{\frakB}{{\mathfrak B}}
\nc{\frakb}{{\mathfrak b}} \nc{\frakd}{{\mathfrak d}}
\nc{\oD}{\overline{D}}
\nc{\frakF}{{\mathfrak F}} \nc{\frakg}{{\mathfrak g}}
\nc{\frakm}{{\mathfrak m}} \nc{\frakM}{{\mathfrak M}}
\nc{\frakMo}{{\mathfrak M}^0} \nc{\frakp}{{\mathfrak p}}
\nc{\frakS}{{\mathfrak S}} \nc{\frakSo}{{\mathfrak S}^0}
\nc{\fraks}{{\mathfrak s}} \nc{\os}{\overline{\fraks}}
\nc{\frakT}{{\mathfrak T}}
\nc{\oT}{\overline{T}}
\nc{\frakX}{{\mathfrak X}} \nc{\frakXo}{{\mathfrak X}^0}
\nc{\frakx}{{\mathbf x}}
\nc{\frakTx}{\frakT}      
\nc{\frakTa}{\frakT^a}        
\nc{\frakTxo}{\frakTx^0}   
\nc{\caltao}{\calt^{a,0}}   
\nc{\ox}{\overline{\frakx}} \nc{\fraky}{{\mathfrak y}}
\nc{\frakz}{{\mathfrak z}} \nc{\oX}{\overline{X}}

\font\cyr=wncyr10

\nc{\redtext}[1]{\textcolor{red}{#1}}



\makeatletter
\g@addto@macro{\endabstract}{\@setabstract}
\newcommand{\authorfootnotes}{\renewcommand\thefootnote{\@fnsymbol\c@footnote}}%
\makeatother
\begin{document}
\begin{center}
  \LARGE
\textbf{Block type Lie algebras and their representations}

  \normalsize
  \authorfootnotes
Xiaomin Tang   \footnote{Corresponding author: {\it X. Tang. Email:} x.m.tang@163.com}\textsuperscript{1},
Shasha Zhao\textsuperscript{1} \par \bigskip

   \textsuperscript{1}Department of Mathematics, Heilongjiang University, Harbin, 150080, P. R. China   \par

\end{center}


\begin{abstract}
Block type Lie algebras have been studied by many authors in the latest twenty years.
In this paper, we will study a class of more general Block type Lie algebra $\mathcal{B}(p,q)$, which is a class of infinite-dimensional Lie algebra by using the generalized Balinskii-Novikov's construction method to Witt type Novikov algebra. Rather, Block type Lie algebra $\mathcal{B}(p,q)$ with basis
$\{L_{\alpha,i},c|\alpha,i\in\mathbb{Z},i\geq0\}$ over $\mathbb{C}$ and relations:
$$[L_{\alpha,i},L_{\beta,j}]=((i+q)(\beta+p)-(j+q)(\alpha+p))
L_{\alpha+\beta,i+j}+\delta_{\alpha+\beta,0}\delta_{i,0}\delta_{j,0}\frac{\alpha^{3}-\alpha}{12}c,[c,L_{\alpha,i}]=0,
$$
where the parameter $p,q$ are some complex numbers.
We study the representation theory for $\mathcal{B}(p,q)$.  We classify quasifinite irreducible highest weight $\mathcal{B}(p,q)$-module.
We also prove that any quasifinite irreducible module  of Block type Lie algebras $\mathcal{B}(p,q)$ is either a highest or lowest weight module, or else a uniformly bounded module. This paper can be considered as a generalization of the related literatures.

\vspace{2mm}

\noindent{\it Keywords:}
Novikov algebra, Block type Lie algebra; highest weight module; uniformly bounded module;
quasi-finite module

\noindent{\it 2010 Mathematics Subject Classification}: 17B10; 17B65; 17B68.

\end{abstract}

\setcounter{section}{0}
{\ }

 \baselineskip=20pt

\section{Introduction}

Since a class of infinite dimensional simple Lie algebras were introduced by Block \cite{vin}, generalizations of Lie
algebras of this type (usually referred to as Block type Lie algebras) have been studied by many authors in the latest twenty years(see \cite{[Cay],[Ger],Os1,hc,Bai,BM2,KCB,Xia2016}). So far, the representation theory for Block type Lie algebras
is far from being well developed, except for quasifinite representations of some particular Block type Lie algebras. This article first get a class of more extensive Block type Lie algebra by Witt type Novikov algebra (see Section \ref{novikov}) as following
$$
\mathcal{B}(p,q)={\rm Span_{\mathbb{C}}} \{L_{\alpha,i}, c |\alpha,i\in \mathbb{Z}, i\ge 0\},
$$
satisfying
\begin{eqnarray*}
\begin{bmatrix}L_{\alpha,i},L_{\beta,j}\end{bmatrix}
=((i+q)(\beta+p)-(j+q)(\alpha+p))L_{\alpha+\beta,i+j}+\delta_{\alpha+\beta,0}\delta_{i,0}\delta_{j,0}\frac{\alpha^{3}-\alpha}{12}c,\nonumber \\
\begin{bmatrix}c,L_{\alpha,i}\end{bmatrix}=0, \;\;\forall \alpha,\beta,i,j\in{\mathbb Z}.
\end{eqnarray*}
The quasifinite representations of $\mathcal{B}(p,q)$ for some special cases are studied by many scholars.
For example, the authors \cite{BM1} studied the quasifinite representations for $\mathcal{B}(0,1)$, the authors  \cite{KCB,[Ger]} studied
the quasifinite representations of $\mathcal{B}(0,q)$ with $q\neq 0$.  Inspired by this, we will study the more general situation for $\mathcal{B}(p,q)$. Note that the case $p=0$ is considered by the  above mentioned papers, we study only the case $p\neq 0$.

Throughout the paper, we use $\mathbb{C}, \mathbb{C}^*,\mathbb{ Q}, \mathbb{Q}^*, \mathbb{Z}$
to denote respectively the sets of complex mumbers, nonzero complex numbers, rational number, nonzero rational number, integers, and $p,q$ is always assumed to be fixed numbers in $C^*$.

In this paper, we firstly study the Novikov algebra and Block type Lie algebras in section \ref{novikov} , we get a class of more extensive Block type Lie algebra by Witt type Novikov algebra. Then, in Section \ref{exmaple} we discuss a class of Block type Lie algebras and give two examples of their quasifinite modules, we also give the classification of the quasifinite highest weight modules. Finally, in Section \ref{classi} the classification of qusifinite module for a class of Block type Lie algebras is studied. We first start our main content from the following section.

\section{Novikov algebra and Block type Lie algebras}\label{novikov}

Novikov algebras were introduced in connection with Poisson brackets of hydrodynamic type \cite{GD1979} and Hamiltonian operators in the formal variational calculus \cite{BN}. The Novikov algebras are a class of Lie-admissible algebras whose commutators are Lie algebras. Thus it is useful to relate the study of Novikov algebras to the theory of Lie algebras.  Recall that

\begin{definition}
A Novikov algebra $A$ is a vector space over $\mathbb{C}$ with a bilinear product $(a, b)\rightarrow ab$ satisfying (for any $a, b, c \in A$)
\begin{equation}\label{nov1}
(ab)c-a(bc) = (ba)c-b(ac),
\end{equation}
\begin{equation}\label{nov2}
(ab)c = (ac)b.
\end{equation}
\end{definition}

 If we define an algebra $A$ only using the condition (\ref{nov1}), then it is called the Left-symmetric algebra. It is a class of Lie-admissible algebras whose commutators  $[x,y]= xy-yx$ defines a Lie algebra $\frak g=\frak g(A)$, which is called the {\it sub-adjacent} Lie algebra of $A$, and
conversely $A$ is called {\it compatible} on $\frak g(A)$. Left-symmetric algebra can be traced back to the study of rooted tree algebras (\cite{Cay}), geometry and deformation theory. They appear in many fields in mathematics and physics under different names like pre-Lie algebras, Vinberg algebras and quasi-associative algebras.

Clearly, Novikov algebra is a calss of special Left-symmetric algebra.
Balinskii-Novikov \cite{BN}  give a way to get a class of infinite-dimensional Lie algebra by Novikov algebra, which is called Balinskii-Novikov's construction. It has caught the interest of many scholars \cite{bp2010,bp2011,bp2012}. Recently,  Balinskii-Novikov's construction has been extended to many algebraic structures similar to Lie algebra, such as Lie superalgebras \cite{peib2012,WCB2012}, Lie conformal algebras \cite{YF2015} and Vertex algebras \cite{BLP2015}, and so on. In particular, any Novikov
algebra corresponds to an infinite-dimensional ``Virasoro-type"  Lie algebra by certain affinization given by Baliskii and Novikov  in \cite{BN}.
We slightly generality their conclusions, or rather one has

\begin{theorem}\label{theo-nov}
Let $A$ be a vector space over $\mathbb{C}$ with a bilinear product $(a, b)\rightarrow ab$,
  and let
 \begin{equation}
 L(A) = A \otimes \mathbb{C}[t, t^{-1}].
 \end{equation}
For a fixed complex number $q$, define a bilinear product $[,]: L(A)\times L(A) \rightarrow L(A)$ given by
\begin{equation}
[a[m],b[n]]=(m+q)(ab)[m+n]-(n+q)(ba)[m+n]
\end{equation}
where $a[m]:=a\otimes t^{m+1}$, for any $a,b\in A$ and $m,n\in \mathbb{Z}$.
 Then $L(A)$ is a Lie algebra if and only if $A$ is a Novikov algebra.
\end{theorem}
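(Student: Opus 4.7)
The plan is to verify both directions of the equivalence by expanding the Jacobi identity in $L(A)$ into an $A$-valued polynomial identity in the parameters $m,n,k$, and then reading off the two Novikov axioms from the individual polynomial coefficients. First I would dispose of antisymmetry in one line: inspection of the defining formula shows that swapping the two arguments negates the bracket, so skew-symmetry is automatic and imposes no condition on the product in $A$. Hence the only content lies in the Jacobi identity applied to triples $a[m],b[n],c[k]$.

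Next, applying the defining formula twice, I would expand $[[a[m],b[n]],c[k]]$ and its two cyclic permutations. Writing $M=m+q$, $N=n+q$, $K=k+q$ for readability, the Jacobi sum lies in $A\otimes t^{m+n+k+1}$ and, once one collects terms, is a polynomial of total degree two in $(M,N,K)$ whose $A$-valued coefficients are the twelve products $(ab)c$, $a(bc)$, $c(ab)$, $(ba)c$, $b(ac)$, $c(ba)$ together with their two images under the cyclic relabelling $(a,b,c)\mapsto(b,c,a)$. Since Jacobi is required for every triple $(m,n,k)\in\mathbb{Z}^3$ and the values $(M,N,K)=(m+q,n+q,k+q)$ fill a Zariski-dense subset of $\mathbb{C}^3$, the Jacobi identity is equivalent to the vanishing of this polynomial coefficient by coefficient in $A$.

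Collecting coefficients, the monomial $M^2$ contributes $(ab)c-(ac)b$, whose vanishing for all $a,b,c\in A$ is exactly (\ref{nov2}); the monomials $N^2$ and $K^2$ give the cyclic relabellings of the same statement. The mixed monomial $MN$ contributes $(ab)c-(ba)c-a(bc)+b(ac)$, whose vanishing is exactly (\ref{nov1}); analogously $NK$ and $KM$ give its cyclic relabellings. The linear coefficients in $M$, $N$, $K$ turn out to be $-q$ times right-commutativity expressions, and so vanish automatically once (\ref{nov2}) holds, while the constant coefficient is already zero. This settles both implications at once: if $A$ is Novikov then every coefficient vanishes and $L(A)$ satisfies Jacobi, and conversely if $L(A)$ is Lie then the polynomial identity forces both (\ref{nov1}) and (\ref{nov2}).

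The main obstacle is purely combinatorial bookkeeping. The Jacobi expansion produces twelve generally non-proportional products in $A$, and to extract the polynomial coefficients cleanly one must carefully sort them, most efficiently by tagging each product according to which cyclic permutation of $(a,b,c)$ it descends from, so that one can see which monomial in $(M,N,K)$ it is attached to. Once this table is in hand, the identification of the $M^2$-coefficient with (\ref{nov2}) and the $MN$-coefficient with (\ref{nov1}) is immediate, and the remaining coefficients cancel without further computation.
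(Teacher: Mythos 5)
Your proposal is correct and follows essentially the same route as the paper's proof: expand the Jacobi identity for $a[m],b[n],c[k]$, regard the resulting $A$-valued expression as a polynomial in the indices, and read off right-commutativity (\ref{nov2}) from the quadratic coefficient of $m^2$ (your $M^2$) and left-symmetry (\ref{nov1}) from the $mn$ (your $MN$) coefficient, with the converse obtained by checking that all coefficients vanish when $A$ is Novikov. Your only refinements are cosmetic ones—the shift $M=m+q$, the explicit Zariski-density justification, and the systematic check of the linear and constant coefficients—which the paper handles implicitly by direct cancellation.
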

\begin{proof}
For any $a,b,c\in A$ and $m,n,k\in \mathbb{Z}$, we let
\begin{eqnarray*}
\Theta_1=(n+q)(m+q)a(bc)-(n+q)(n+k+q)(bc)a+(k+q)(n+k+q)(cb)a-(k+q)(m+q)a(cb),\\
\Theta_2=(m+q)(k+q)c(ab)-(m+q)(m+n+q)(ab)c+(n+q)(m+n+q)(ba)c-(n+q)(k+q)c(ba),\\
\Theta_3=(k+q)(n+q)b(ca)-(k+q)(m+k+q)(ca)b+(m+q)(m+k+q)(ac)b-(m+q)(n+q)b(ac).
\end{eqnarray*}
It follows by a simple computation that
\begin{eqnarray*}
\begin{bmatrix} a[m],\begin{bmatrix}b[n],c[k]\end{bmatrix}\end{bmatrix}=\Theta_1 [k+m+n],\\
\begin{bmatrix} c[k],\begin{bmatrix}a[m],b[n]\end{bmatrix}\end{bmatrix}=\Theta_2 [k+m+n],\\
\begin{bmatrix} b[n],\begin{bmatrix}c[k],a[m]\end{bmatrix}\end{bmatrix}=\Theta_3 [k+m+n].
\end{eqnarray*}
Further, we see that
\begin{eqnarray}
&&\Theta_1+\Theta_2+\Theta_3  \nonumber \\
&=&(n+q)(m+q)(a(bc)-b(ac))+ (m+q)(k+q)(c(ab)-a(cb))+(n+q)(k+q)(b(ca)-c(ba))\nonumber \\ \label{aa1}
&&+(n+q)(m+n+q)(ba)c-(n+q)(n+k+q)(bc)a \nonumber\\
&&+(k+q)(n+k+q)(cb)a-(k+q)(m+k+q)(ca)b \\
&&+(m+q)(m+k+q)(ac)b-(m+q)(m+n+q)(ab)c. \nonumber
\end{eqnarray}
If $A$ is a Lie algebra, the Jacobin equation tells us that
\begin{equation}\label{jacobi}
\begin{bmatrix} a[m],\begin{bmatrix}b[n],c[k]\end{bmatrix}\end{bmatrix}+
\begin{bmatrix} c[k],\begin{bmatrix}a[m],b[n]\end{bmatrix}\end{bmatrix}+
\begin{bmatrix} b[n],\begin{bmatrix}c[k],a[m]\end{bmatrix}\end{bmatrix}=(\sum_{i=1}^3\Theta_i) [k+m+n]=0,
\end{equation}
and so that $\Theta_1+\Theta_2+\Theta_3=0$. Review the equation (\ref{aa1}) as a formal polynomial in $m,n,k$.
By observing the coefficients of terms $m^2$ and $mn$  we have $(ab)c=(ac)b$ and $(ab)c-a(bc)=(ba)c-b(ac)$ for all $a,b,c\in A$.
This implies equations (\ref{nov1}) and (\ref{nov2}) hold, i.e., $A$ is a Novikov algebra.

Conversely, if $A$ is a Novikov algebra, it is enough to verify that the Jacobin equation hold.
Note that $(ab)c-a(bc)=(ba)c-b(ac)$ and $(ab)c=(ac)b$ for all $a,b,c\in A$, we have by (\ref{aa1}) that
\begin{eqnarray*}
&&\Theta_1+\Theta_2+\Theta_3 \\
&=&(n+q)(m+q)((ab)c-(ba)c))+ (m+q)(k+q)((ca)b-(ac)b)+(n+q)(k+q)((bc)a-(cb)a)\\
&&+(n+q)(m-k)(ba)c+(k+q)(n-m)(cb)a+(m+q)(k-n)(ac)b\\
&=&((n+q)(m+q)-(m+q)(k+q)+(m+q)(k-n))(ab)c\\
&&+((n+q)(m-k)-(m+q)(n+q)+(n+q)(k+q))(ba)c\\
&&+((m+q)(k+q)-(n+q)(k+q)+(k+q)(n-m))(ca)b\\
&=&0.
\end{eqnarray*}
This tells us that (\ref{jacobi}) holds. The proof is completed.
\end{proof}

\begin{remark}
When $q=1$, the above way is called Balinskii-Novikov's construction.  The authors \cite{bp2010,bp2011,bp2012} study some infinite-dimensional Lie algebras realized by Balinskii-Novikov's construction of some finite-dimensional Novikov algebras.
\end{remark}

 Note that the authors \cite{bp2010,bp2011,bp2012} study some infinite-dimensional Lie algebras realized by Balinskii-Novikov's construction of some finite-dimensional Novikov algebras but not by infinite-dimensional Novikov algebras. Inspired by this, we consider this type of problem for some infinite-dimensional Novikov algebras, which is related to Witt algebra (is called Witt type Novikov algebra ).

Recall that the Witt algebra $W$ (of rank one) which is a infinite-dimensional complex Lie algebra with a
basis $\{x_\alpha | \alpha \in \mathbb{Z}\}$ satisfying
\begin{equation}
[x_{\alpha}, x_{\beta}]=(\beta-\alpha)x_{\alpha+\beta},\;\;\forall \alpha,\beta\in \mathbb Z.\label{eq:W}
\end{equation}

The complex Witt algebra was first defined by Cartan (1909), and its analogues over finite fields were studied by Witt in the 1930s.
The Virasoro algebra is a central extension of the  Witt algebra, which have many applications in mathematics and physics. Recall
\cite{tb,KBC} given the compatible Novikov algebra on Witt algebra as following.
\begin{equation}\label{nong}
x_{\alpha}x_{\beta}=(\beta+p)x_{\alpha+\beta}+\mu x_{\alpha+\beta+\theta}, \forall \alpha,\beta\in \mathbb Z,
\end{equation}
where $p\in \mathbb{C}$ and $\theta \in \mathbb{Z}$ are fixed number. Now let us by using Theorem \ref{theo-nov} to above compatible Novikov algebra on Witt algebra and thus get a Lie algebra as follows.

\begin{proposition}\label{pro1}
For any $\alpha,\mu\in \mathbb{C}$ and $\theta\in \mathbb{Z}$, there is an infinite-dimensional Lie algebra
induced from the Novikov algebra on Witt algebra defined by (\ref{nong}), which is isomorphic to
the Lie algebra
$\mathcal{B}(p,q,\mu,\theta)$ with a $\mathbb C$-basis
$\{L_{\alpha,i}|\alpha,i\in \mathbb{Z}\}$ satisfying
\begin{equation}
[L_{\alpha,i},L_{\beta,j}]=((i+q)(\beta+p)-(j+q)(\alpha+p))L_{\alpha+\beta,i+j}+(i-j)\mu
L_{\alpha+\beta+\theta,i+j},\;\;\forall \alpha,\beta,i,j\in{\mathbb Z}.
\end{equation}
\end{proposition}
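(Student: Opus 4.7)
The plan is to apply Theorem \ref{theo-nov} directly to the Novikov structure (\ref{nong}) on the Witt algebra and simply compute the induced bracket, verifying that after the obvious identification of basis elements it matches the claimed formula. The hypotheses of Theorem \ref{theo-nov} require $(A, \cdot)$ to be a Novikov algebra; the fact that (\ref{nong}) defines such a structure on the Witt vector space is recalled from \cite{tb,KBC}, so this entry cost is already paid. (If a self-contained check is preferred, one verifies axiom (\ref{nov2}) by expanding $(x_\alpha x_\beta)x_\gamma$ and $(x_\alpha x_\gamma)x_\beta$ — both yield $(\beta+p)(\gamma+p)x_{\alpha+\beta+\gamma} + \mu[(\beta+p)+(\gamma+p)]x_{\alpha+\beta+\gamma+\theta} + \mu^{2} x_{\alpha+\beta+\gamma+2\theta}$ — and one verifies axiom (\ref{nov1}) by observing that the associator $(x_\alpha x_\beta)x_\gamma - x_\alpha(x_\beta x_\gamma)$ depends symmetrically on the pair $(\alpha,\beta)$, namely it equals $-\gamma(\gamma+p)x_{\alpha+\beta+\gamma} - \mu(\gamma+\theta)x_{\alpha+\beta+\gamma+\theta}$, which is manifestly invariant under swapping $\alpha \leftrightarrow \beta$.)

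Granting the Novikov property, Theorem \ref{theo-nov} produces the Lie algebra $L(A) = A \otimes \mathbb{C}[t,t^{-1}]$ with basis $\{x_\alpha[i] := x_\alpha \otimes t^{i+1} \mid \alpha, i \in \mathbb{Z}\}$ and bracket
\begin{equation*}
[x_\alpha[i], x_\beta[j]] = (i+q)(x_\alpha x_\beta)[i+j] - (j+q)(x_\beta x_\alpha)[i+j].
\end{equation*}
Substituting (\ref{nong}) gives $x_\alpha x_\beta = (\beta+p) x_{\alpha+\beta} + \mu\, x_{\alpha+\beta+\theta}$ and $x_\beta x_\alpha = (\alpha+p) x_{\alpha+\beta} + \mu\, x_{\alpha+\beta+\theta}$. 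The $\mu$-terms at position $\alpha+\beta+\theta$ combine with coefficient $(i+q)\mu - (j+q)\mu = (i-j)\mu$, while the main terms at position $\alpha+\beta$ combine with coefficient $(i+q)(\beta+p) - (j+q)(\alpha+p)$.

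The final step is to define the linear map $\varphi : L(A) \to \mathcal{B}(p,q,\mu,\theta)$ by $\varphi(x_\alpha[i]) = L_{\alpha,i}$; it is a bijection on basis elements, and the computation above shows that it intertwines the two brackets, yielding the desired isomorphism. There is essentially no obstacle here: once Theorem \ref{theo-nov} is in hand, the proof reduces to one clean substitution and a renaming of basis, so the only thing to be careful about is bookkeeping with the shift $a[m] = a \otimes t^{m+1}$ so that the indices $i,j$ on the $L$'s correspond to the $t$-exponents $i+1, j+1$ as prescribed.
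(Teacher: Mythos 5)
Your proposal is correct and follows essentially the same route as the paper, whose proof is exactly the application of Theorem \ref{theo-nov} to the Novikov structure (\ref{nong}) with the identification $L_{\alpha,i}=x_\alpha\otimes t^{i+1}$. Your optional direct verification of axioms (\ref{nov1}) and (\ref{nov2}) is accurate but not needed, since the paper (like you) may simply cite \cite{tb,KBC} for the Novikov property.
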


\begin{proof}
The conclusion follows from Theorem \ref{theo-nov} by setting $L_{\alpha,i}=x_\alpha \otimes t^{i+1}$ for any $m,i\in \mathbb Z$.
\end{proof}

Since a class of infinite dimensional simple Lie algebras were introduced by Block \cite{vin}, generalizations of Lie
algebras of this type (usually referred to as Block type Lie algebras) have been studied by many authors in the latest twenty years (see \cite{[Cay],[Ger],Os1,hc,Bai,BM2,KCB}).  We will see that the Block type Lie algebras  have a close relationship to the Lie algebra $\mathcal{B}(p,q,\mu,\theta)$ of Proposition \ref{pro1}.

\begin{example}
Consider the Lie algebra  $\mathcal{B}(p,q,\mu,\theta)$ given by Proposition \ref{pro1} when $p=1,q=s-1, \mu=-s, \theta=1$ for some $s$,
and denote $R_{\alpha,i}=L_{\alpha-1,i}$, we get the Lie algebra $ \mathcal{B}(1,s-1,-s,1)={\rm Span_{\mathbb{C}}} \{R_{\alpha,i}|\alpha,i\in \mathbb{Z}\},
$
satisfying
\begin{equation}
[R_{\alpha,i},R_{\beta,j}]=s(j-i)R_{\alpha+\beta,i+j}+((i+s-1)\beta-(j+s-1)\alpha)R_{\alpha+\beta-1,i+j}
\end{equation}

which is the Block Lie algebra $B(s,\mathbb{Z})$ studied by \cite{hb,su2007}.
\end{example}

\begin{example}\label{exm26}
Consider the Lie algebra  $\mathcal{B}(p,q,\mu,\theta)$ given by Proposition \ref{pro1} when $\mu=\theta=0$, that is
$$
\mathcal{B}(p,q,0,0)={\rm Span_{\mathbb{C}}} \{L_{\alpha,i}|\alpha,i\in \mathbb{Z}\},
$$
satisfying
\begin{equation}
[L_{\alpha,i},L_{\beta,j}]=((i+q)(\beta+p)-(j+q)(\alpha+p))L_{\alpha+\beta,i+j}, \;\;\forall \alpha,\beta,i,j\in{\mathbb Z}.
\end{equation}
Note that some subalgebras of $\mathcal{B}(p,q,0,0)$ or their central extension are called the {\bf{Block type Lie algebra}}.
Many authors study theory on constructions and representations of some Block type Lie algebra,
We list some literatures ( but not comprehensive ) as \cite{[Cay],[Ger],Bai,hb,BM2,KCB,BM1,su2015}.
\end{example}

\section{a class of Block type Lie algebra and some its modules} \label{exmaple}

In this section, a class of Block type Lie algebra $\mathcal{B}(p,q)$, which is a subalgebras of  $\mathcal{B}(p,q,0,0)$ of Example \ref{exm26} with the universal central extension, will come on the stage. That is, the Block type Lie algebra
$$
\mathcal{B}(p,q)={\rm Span_{\mathbb{C}}} \{L_{\alpha,i}, c |\alpha,i\in \mathbb{Z}, i\ge 0\},
$$
satisfying
\begin{eqnarray}
\begin{bmatrix}L_{\alpha,i},L_{\beta,j}\end{bmatrix}
=((i+q)(\beta+p)-(j+q)(\alpha+p))L_{\alpha+\beta,i+j}+\delta_{\alpha+\beta,0}\delta_{i,0}\delta_{j,0}\frac{\alpha^{3}-\alpha}{12}c,\nonumber \\
\begin{bmatrix}c,L_{\alpha,i}\end{bmatrix}=0, \;\;\forall \alpha,\beta,i,j\in{\mathbb Z}.
\end{eqnarray}

About the Block type Lie algebra $\mathcal{B}(p,q)$, we would like point out some facts as follows.

\begin{itemize}

\item We can realize the Lie algebra $\mathcal{B}(p,q)$ in the space $\mathbb{C}[x,x^{-1}]\otimes t^{q}\mathbb{C}[t]\oplus\mathbb{C}c$, with the bracket
\begin{eqnarray*}
&&[x^{\alpha}f(t),x^{\beta}g(t)]\\
&=&x^{\alpha+\beta}t^{1-q}((\beta+p)f^{\prime}(t)g(t)-(\alpha+p)f(t)g^{\prime}(t))+\delta_{\alpha+\beta,0}
\frac{\alpha^{3}-\alpha}{12}{\rm Res}_{t}(t^{-2q-1}f(t)g(t))c
\end{eqnarray*}
where $\alpha, \beta\in Z, f(t), g(t)\in t^{q}\mathbb{C}[t]$, and the prime stands for the derivative $\frac{d}{dt}$.  The corresponding relationship is $L_{\alpha,i}=x^{\alpha}t^{q+i}$.

\item The Lie algebra $\mathcal{B}(p,q)$ has a natural $\mathbb{Z}-$gradation $\mathcal{B}(p,q)=\oplus_{\alpha\in \mathbb{Z}}\mathcal{B}(p,q)_{\alpha}$ with
$$
\mathcal{B}(p,q)_{\alpha}=Span\{L_{\alpha,i}|i\in Z,i\geq 0\}\oplus\delta_{\alpha,0}\mathbb{C}c.
$$

\item For $q\neq 0$, $\mathcal{B}(p,q)$ contains the subalgebra $Vir$ isomorphic to the well-known Virasoro algebra, where
\begin{equation}
Vir={\rm Span}\{L_{\alpha},k|\alpha\in Z\}, L_{\alpha}\doteq q^{-1}L_{\alpha,0}, k\doteq q^{-2}c
\end{equation}
\begin{equation}
[L_{\alpha},L_{\beta}]=(\beta-\alpha)L_{\alpha+\beta}+\frac{\alpha^{3}-\alpha}{12}\delta_{\alpha+\beta,0}k, [k,L_{\alpha}]=0
\end{equation}

\item For $p,p^\prime,q,q^\prime\in \mathbb{Q}_{+}^{*}$, if $(p,q)\neq (p^\prime, q^\prime)$, then $\mathcal{B}(p,q) \not\cong \mathcal{B}(p^\prime, q^\prime)$. The authors \cite{xia2014} show that
$\mathcal{B}(0,q) \not\cong \mathcal{B}(0, q^\prime)$ while $q\neq q^\prime$. Note that the status of $p$ and $q$ in $\mathcal{B}(p,q)$ are equal, it is easy to obtain the same assertion in the similar way.

\item The quasifinite representations of $\mathcal{B}(p,q)$ for some special cases are studied by many scholars.
For example, the authors \cite{BM1} studied the quasifinite representations for $\mathcal{B}(0,1)$, the authors  \cite{KCB,[Ger]} studied
the quasifinite representations of $\mathcal{B}(0,q)$.
\end{itemize}

 Inspired by this, we will study the more general situation for $\mathcal{B}(p,q)$ in the rest of this paper.
Now Let us recall some important definition about the representation (or module) as follows:

\begin{definition}
A module $V$ over $\mathcal{B}(p,q)$ is called

\begin{itemize}

\item $\mathbb{Z}-$graded if $V=\oplus_{\alpha\in \mathbb{Z}} V_{\alpha}$ and $\mathcal{B}(p,q)_{\alpha}V_{\beta}\subset V_{\alpha+\beta}$ for all $\alpha,\beta$;

\item quasifinite if it is $\mathbb{Z}-$graded and $\dim V_{\beta}<\infty$ for all $\beta$;

\item uniformly bounded if it is $\mathbb{Z}-$graded and there is $N\geq 0$ with $dimV_{\beta}\leq N$ for all $\beta$;

\item a module of the intermediate series if it is $\mathbb{Z}-$graded and $dimV_{\beta}\leq 1$ for all $\beta$;

\item a highest(resp.lowest) weight module if there exists some $\wedge\in\mathcal{B}(p,q)_{0}^{*}$ such that $V=V(\wedge)$,
 where $V(\wedge)$ is a module generated by a highest (resp.lowest) weight vector $v_{\wedge}\in V(\wedge)_{0}$, $v_{\wedge}$ satisfies $h. v_{\wedge}=\wedge(h)v_{\wedge}, h\in\mathcal{B}(p,q)_{0}$, $\mathcal{B}(p,q)_{+}v_{\wedge}=0$ (resp. $\mathcal{B}(p,q)_{-}v_{\wedge}=0$),
 where $\mathcal{B}(p,q)_{\pm}=\oplus_{\pm\alpha>0}\mathcal{B}(p,q)_{\alpha}$.

 \end{itemize}
\end{definition}

Using the gradation, we introduce the following notations:
$$\mathcal{B}(p,q)_{[\beta,\gamma]}=\sum_{\beta\leq\alpha\leq\gamma}\mathcal{B}(p,q)_{\alpha}~~\beta,\gamma\in\mathbb{Z}.$$
similarly for $\mathcal{B}(p,q)_{[\beta,+\infty]},\mathcal{B}(p,q)_{[\beta,\gamma)}$ and so on. We have the following triangular decomposition: \begin{equation}
\mathcal{B}(p,q)=\mathcal{B}(p,q)_{-}\oplus\mathcal{B}(p,q)_{0}\oplus\mathcal{B}(p,q)_{+}
\end{equation}

When we study representations  of a Lie algebra of this kind , we encounter the difficulty that though it is $\mathbb{Z}-$
graded, the graded subspaces are still infinite dimensional, thus the study of quasifinite modules is a nontrivial probiem. We
give two examples of quasifinite modules as follows.

\subsection{The modules of the intermediate series}

Note that $\mathcal{B}(p,q)$ contains a subalgebra isomorphic to Virasoro algebra. Similar to the case of Virasoro algebra, we now give the irreducible $\mathcal{B}(p,q)$-modules of the intermediate series, i.e.,  $A_{a,b},A_{a},B_{a},a,b\in \mathbb{C}$, with the same basics $\{v_{\mu}|\mu\in\mathbb{Z}\}$ of $A_{a,b}, A_{a}$ and  $B_{a}$ and $\mathcal{B}(p,q)$ acts on their respectively as follows.
\begin{itemize}
\item $A_{a,b}:L_{\alpha,0}v_{\mu}=q(a+\mu+b\alpha)v_{\alpha+\mu},\ \ L_{\alpha,i}v_{\mu}=0\ (i>0),$

\item $A_{\alpha}:L_{\alpha,0}v_{\mu}=q(\mu+\alpha)v_{\alpha+\mu}(\mu\neq0),\ L_{\alpha,0}v_{0}=q\alpha(a+\alpha)v_{\alpha},
L_{\alpha,i}v_{\mu}=0\ (i>0),$

\item $
B_{a}:L_{\alpha,0}v_{\mu}=q\mu v_{\alpha+\mu}(\mu\neq-\alpha),\ L_{\alpha,0}v_{-\alpha}=-q\alpha(a+\alpha)v_{0},
L_{\alpha,i}v_{\mu}=0\ (i>0).$
\end{itemize}
It is obvious that the modules of the intermediate series are a class of module of uniformly bounded.

\subsection{Quasifinite highest weight modules }

Now we want to study the quasifinite highest weight modules of $\mathcal{B}(p,q)$, the lowest weight modules are similar.
To sate our main result in this part, we need to introduce the generating series and Verma module.

\begin{itemize}
\item
For any function $\Lambda\in \mathcal{B}(p,q)_0^*$, we set labels $\Lambda_{i}=\Lambda(L_{0,i})$ for $i\ge 0$,
and define $\Delta_{\Lambda}(z,p,q)$ as a following generating series with variable $z$,
\begin{equation}
\Delta_{\Lambda}(z,p,q)=2q\sum_{i=0}^{\infty}\frac{z^{i}}{i!}\Lambda_{i}
+\sum_{i=0}^{\infty}(1-p^{2})\frac{z^{i+1}}{i!}\Lambda_{i+1}=\Lambda((2q+(1-p^{2})zt)t^{q}e^{zt}).
\end{equation}
Recall that a quasipolynomial is a linear combination of functions of the form $f(z)e^{bz}$ , where $f(z)\in\mathbb{C}[z]$,$b\in\mathbb{C}$.

\item A $Verma$ module over $\mathcal{B}(p,q)$ is defined as the induced module
$$M(\Lambda)=U(\mathcal{B}(p,q))\otimes_{U(\mathcal{B}(p,q)_{0}\oplus\mathcal{B}(p,q)_{+}})\mathbb{C}v_{\Lambda},\Lambda\in\mathcal{B}(p,q)_{0}^{*},$$
where $\mathbb{C}v_{\Lambda}$ is the one dimensional $\mathcal{B}(p,q)_{0}\oplus\mathcal{B}(p,q)_{+}$-module and satisfies  $(h+n)(v_{\Lambda})=\Lambda(h)v_{\Lambda}$, here $h\in\mathcal{B}(p,q)_{0}$, $n\in\mathcal{B}(p,q)_{+}$ and $U(\mathcal{B}(p,q))$ stands for the universal enveloping algebra of $\mathcal{B}(p,q)$. Then any highest weight module $V(\Lambda)$ is a quotient module of $M(\Lambda)$ and the irreducible highest weight module $L(\Lambda)$ is the quotient of $M(\Lambda)$ by the maximal proper $\mathbb{Z}$-graded submodule.

\end{itemize}

Our main result in this part is following.

\begin{theorem}\label{theo123}
Let $L(\Lambda)$ be an irreducible highest weight module over $\mathcal{B}(p,q)$ with highest wight $\Lambda\in\mathcal{B}(p,q)_{0}^{*}$.
 Then  $L(\Lambda)$ is quasifinite if and only if $\Delta_{\Lambda}(z,p,q)$ is a quasi-polynomial.
\end{theorem}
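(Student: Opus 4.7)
The plan is to follow the Kac-Radul strategy for classifying quasifinite highest weight modules: reduce quasifiniteness to a condition at a single degree, translate that condition into a linear system on the highest weight, and finally interpret the resulting finite-codimension statement as a quasi-polynomial condition on $\Delta_\Lambda$.

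First I would show that $L(\Lambda)$ is quasifinite iff $\dim L(\Lambda)_{-1}<\infty$. The nontrivial direction uses the commutator formula $[L_{-1,i}, L_{-(n-1),j}] = \bigl((i+q)(p-n+1)-(j+q)(p-1)\bigr) L_{-n,i+j}$: for each prescribed $k = i+j$ the scalar is a nonconstant affine function of $i$ (provided $n \neq 2p$), so one can pick $i$ making it nonzero. Hence $[\mathcal{B}(p,q)_{-1}, \mathcal{B}(p,q)_{-n+1}]$ has finite codimension in $\mathcal{B}(p,q)_{-n}$. An inductive Kac-Radul argument using $X y = [X,U]v_\Lambda + U(X v_\Lambda)$ when $y = U v_\Lambda$ then propagates finite dimensionality from $L(\Lambda)_{-1}$ to every $L(\Lambda)_{-n}$.

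Next, identify $L(\Lambda)_{-1} \cong \mathcal{B}(p,q)_{-1}/I_\Lambda$ with $I_\Lambda := \{X \in \mathcal{B}(p,q)_{-1} : X v_\Lambda = 0\}$. Because the brackets $[L_{n,j},L_{-1,i}]$ lie in $\mathcal{B}(p,q)_+$ for $n \geq 2$ and hence annihilate $v_\Lambda$, singularity of $X v_\Lambda$ reduces to $[L_{1,j},X] v_\Lambda = 0$ for every $j \geq 0$. Substituting $X = \sum_i a_i L_{-1,i}$ yields the explicit linear system
\[
\sum_{i \geq 0} a_i \bigl((j+q)(p-1)-(i+q)(p+1)\bigr)\Lambda_{i+j} = 0, \qquad \forall\, j \geq 0,
\]
so finite codimension of $I_\Lambda$ is equivalent to this system having finite rank.

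Third, encode the system via generating series. Under the realization $L_{-1,i} = x^{-1} t^{q+i}$, a sequence $(a_i)$ corresponds to $a(t) = \sum_i a_i t^i \in \mathbb{C}[t]$, and the conditions become $\Lambda\bigl(t^{q+j}[(j(p-1)-2q)a(t)-(p+1)t a'(t)]\bigr) = 0$. Packaging these over $j$ by an exponential generating function produces a single assertion about the two-variable kernel
\[
K(z,w) = \bigl[(p-1)w - (p+1)z\bigr] H'(z+w) - 2q H(z+w), \qquad H(u) := \Lambda(t^q e^{ut}).
\]
A direct computation yields $\Delta_\Lambda(u) = -K\bigl((1-p)u/2,\,(1+p)u/2\bigr)$, so $\Delta_\Lambda$ is a one-variable section of $K$. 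Finite codimension of $I_\Lambda$ corresponds to $K$ having finite rank as a kernel, which I would then show is equivalent to $\Delta_\Lambda$ satisfying a constant-coefficient linear ODE, i.e., being a quasi-polynomial.

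The main obstacle is this last equivalence. Extracting quasi-polynomiality of the one-variable specialization $\Delta_\Lambda$ from finite rank of the two-variable kernel $K$ requires careful use of translation invariance and differentiation to handle the mixed $H,H'$ structure with polynomial coefficients, and degenerate values such as $p=\pm 1$ will likely need separate treatment. Conversely, in the quasi-polynomial direction one must explicitly construct enough polynomials $a(t)$ annihilating the system to achieve the prescribed finite codimension; this is where the explicit quasi-polynomial expansion of $\Delta_\Lambda$ enters decisively.
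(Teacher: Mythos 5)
Your setup is essentially sound and matches the paper's frame: reducing singularity of a degree $-1$ vector to the equations coming from $L_{1,j}$ alone is exactly the paper's identity $\mathcal{B}(p,q,a)_{0}=[a,\mathcal{B}(p,q)_{1}]$ (formula (\ref{16})), your linear system $\sum_i a_i((j+q)(p-1)-(i+q)(p+1))\Lambda_{i+j}=0$ is the correct bracket computation, and your identity $\Delta_\Lambda(u)=-K\left(\tfrac{(1-p)u}{2},\tfrac{(1+p)u}{2}\right)$ checks out. But the step you yourself call ``the main obstacle'' --- that finite rank of the two-variable kernel $K(z,w)$ is equivalent to quasi-polynomiality of its one-variable slice $\Delta_\Lambda$ --- is precisely the entire content of the theorem, and you leave it unproven. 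The paper never faces this two-variable problem: via its Lemma \ref{lemm11} (the Kac--Liberati parabolic/generalized Verma machinery), quasifiniteness is equivalent to the existence of a \emph{single} nonzero $a=x^{-1}f(t)\in\mathcal{B}(p,q)_{-1}$ with $\Lambda([a,\mathcal{B}(p,q)_{1}])=0$, and the generating-function substitution $g(t)=t^{q}e^{zt}$ is designed to turn that one existential condition directly into ``$\widetilde{f}(d/dz)\Delta_\Lambda=0$ for some nonzero polynomial $\widetilde{f}$,'' which is verbatim the quasipolynomial criterion. By insisting on finite codimension of $I_\Lambda$ (finite rank of $K$) instead of mere nontriviality of $I_\Lambda$, you have traded a one-variable constant-coefficient ODE statement for a strictly harder finite-rank-versus-section problem that you do not solve; note also that the cheap implication ``quasifinite $\Rightarrow I_\Lambda\neq 0$'' already follows from $\dim L(\Lambda)_{-1}<\infty$ and $\dim\mathcal{B}(p,q)_{-1}=\infty$, so finite rank buys you nothing in that direction either.

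Moreover, the difficulty you flag is genuine and not merely bookkeeping: packaging your system in one variable, a nonzero $a\in I_\Lambda$ gives $a(D)\left(2qH+(1-p)wH'\right)+2p\,a'(D)H'\equiv 0$ with $H(u)=\Lambda(t^{q}e^{ut})$, i.e.\ there is a leftover term mixing $a$ and $a'$ which vanishes only when $p=0$; turning this into a constant-coefficient ODE for the specific combination $2qH+(1-p^{2})wH'$ requires a real argument (the paper's display (\ref{18}) passes over exactly this point, so you cannot simply defer to it), and neither your kernel formalism nor ``translation invariance and differentiation'' as gestured at supplies it. Secondarily, your first reduction is also only a sketch: what the propagation from $L(\Lambda)_{-1}$ to all $L(\Lambda)_{-n}$ actually needs is nondegeneracy of the parabolic generated by a single nonzero degree $-1$ element (the paper's Lemma on $\mathcal{P}(p,q,a)$), i.e.\ finite codimension of the iterated brackets of that element with $\mathcal{B}(p,q)_{0}$, not merely of $[\mathcal{B}(p,q)_{-1},\mathcal{B}(p,q)_{-n+1}]$ in $\mathcal{B}(p,q)_{-n}$, and your exceptional cases ($n=2p$, later $p=\pm 1$) are noted but not handled. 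As it stands the proposal sets up the right linear algebra but stops short of proving either direction of the stated equivalence.
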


In order to prove the result, we give some definitions and preliminary results.

\begin{definition}\label{def11}
Let $\mathcal{L}=\oplus_{\alpha\in\mathbb{Z}}\mathcal{L}_{\alpha}$ be a $\mathbb{Z}$-graded Lie algebra.

\begin{enumerate}
 \item \label{33a}
 A subalgebra $P$ of $\mathcal{L}$ is called parabolic if it contains $\mathcal{L}_{0}+\mathcal{L}_{+}$
 as a proper subalgebra,namely, $P=\oplus_{\alpha\in\mathbb{Z}}P_{\alpha}$ with $P_{\alpha}=\mathcal{L}_{\alpha},\alpha\geq0$, for some $\alpha<0$ such that $P_{\alpha}\neq0$.

\item \label{33b}
Given $0\neq a\in \mathcal{L}_{-1}$, we define a parabolic subalgebra
$\mathcal{P}(a)=\oplus_{\alpha\in Z}\mathcal{P}(a)_{\alpha}$ of $\mathcal{L}$ as follows:
\begin{equation*}
\mathcal{P}(a)_{\alpha}=
\left\{ \begin{array}{ll}
\mathcal{L}_{\alpha} & { \mbox if} \ \alpha\geq0,\\
{\rm Span} \{[\cdots,[\mathcal{L}_{0},[\mathcal{L}_{0},a]]\cdots]\} & { \mbox if} \ \alpha=-1,\\
 \begin{bmatrix}\mathcal{P}(a)_{-1},\mathcal{P}(a)_{\alpha+1}\end{bmatrix}& { \mbox if} \ \alpha\leq-2.
\end{array}
\right.
\end{equation*}

\item \label{33c}
A parabolic subalgebra $\mathcal{P}$ is called nondegenerate if $\mathcal{P}_{\alpha}$ has finite codimension in $\mathcal{L}_{\alpha}$, for all $\alpha<0,$.

\item \label{33d}
A nonzero element $a$ in $\mathcal{L}_{-1}$ is called nondegenerate if $\mathcal{P}(a)$ is  nondegenerate.

\end{enumerate}
\end{definition}

Define a parabolic subalgebra $\mathcal{B}(p,q,a)=\oplus_{\alpha\in\mathbb{Z}}\mathcal{B}(p,q,a)_{\alpha}$ of
$\mathcal{B}(p,q)$ as in Definition \ref{def11} (\ref{33b}), where $a\in\mathcal{B}(p,q)_{-1}$.
 By \cite{BM1,BM2,Bai}, $\mathcal{B}(p,q,a)$ is the minimal parabolic subalgebra containing $a$ and
$$\mathcal{B}(p,q,a)_{0}\doteq\mathcal{B}(p,q)_{0}\cap[\mathcal{B}(p,q,a),\mathcal{B}(p,q,a)]=[a,\mathcal{B}(p,q)_{1}].$$
It is clear that $$[x^{-1}f(t),xg(t)]=t^{1-q}((f(t)g(t))^{\prime}+p(f^{\prime}(t)g(t)-f(t)g^{\prime}(t)))$$ and
$$
x^{-1}f(t)\in\mathcal{B}(p,q)_{-1},xg(t)\in\mathcal{B}(p,q)_{1},
$$
this implies
\begin{equation}\label{16}
\mathcal{B}(p,q,a)_{0}={\rm Span} \{ t^{1-q}((f(t)g(t))^{\prime}+p(f^{\prime}(t)g(t)-f(t)g^{\prime}(t)))|g(t)\in t^{q}\mathbb{C}[t]\}.
\end{equation}

\begin{lemma}
Let $\mathcal{P}(p,q)=\oplus_{\alpha\in\mathbb{Z}}\mathcal{P}(p,q)_{\alpha}$ be the parabolic subalgebra of
$\mathcal{B}(p,q)$.

\begin{enumerate}

\item \label{34a}
 There exists $0 \neq a\in\mathcal{B}(p,q)_{-1}$ such that $\mathcal{P}(p,q,a)\subseteq\mathcal{P}(p,q)$;

\item \label{34b}
For any $\alpha<0$, the subspace $\mathcal{P}(p,q)_{\alpha}$ is nontrivial, and has finite codimension in
$\mathcal{B}(p,q)_{\alpha}$;

\item \label{34c}
$\mathcal{P}(p,q)$ is nondegenerate , and for any $0\neq a\in\mathcal{B}(p,q)_{-1}$ is nondegenerate.

\end{enumerate}
\end{lemma}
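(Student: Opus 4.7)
The three parts are closely linked: I will prove (a) first by producing an element of $\mathcal{P}(p,q)_{-1}$, then establish (c) as a free-standing statement about the inductively defined parabolic $\mathcal{P}(p,q,a)=\mathcal{B}(p,q,a)$, and finally deduce (b) from (a) and (c). Throughout I will work in the differential-operator realization $\mathbb{C}[x,x^{-1}]\otimes t^{q}\mathbb{C}[t]\oplus\mathbb{C}c$ of $\mathcal{B}(p,q)$ and track the leading powers of $t$ in successive brackets via the formula $[x^{\alpha}f,x^{\beta}g]=x^{\alpha+\beta}t^{1-q}((\beta+p)f'g-(\alpha+p)fg')$ (plus possible central term).

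For (a), parabolicity guarantees some $\alpha_{0}<0$ with $\mathcal{P}(p,q)_{\alpha_{0}}\neq 0$; pick $0\neq u\in\mathcal{P}(p,q)_{\alpha_{0}}$. If $\alpha_{0}=-1$ set $a:=u$; otherwise bracket $u$ with elements of $\mathcal{B}(p,q)_{-\alpha_{0}-1}\subseteq\mathcal{P}(p,q)$ using $[x^{\alpha_{0}}f,x^{-\alpha_{0}-1}g]=x^{-1}t^{1-q}((-\alpha_{0}-1+p)f'g-(\alpha_{0}+p)fg')$ and check that the top $t$-coefficient is nonzero for generic $g\in t^{q}\mathbb{C}[t]$, producing $0\neq a\in\mathcal{P}(p,q)_{-1}$. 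Since $\mathcal{B}(p,q)_{\geq 0}\subseteq\mathcal{P}(p,q)$ and $\mathcal{P}(p,q)$ is a subalgebra containing $a$, the iterative construction of $\mathcal{P}(p,q,a)$ in Definition \ref{def11}(\ref{33b}) stays inside $\mathcal{P}(p,q)$, giving $\mathcal{P}(p,q,a)\subseteq\mathcal{P}(p,q)$.

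For (c), fix $0\neq a=x^{-1}F(t)$ and let $c_{k}t^{q+k}$ be the top term of $F$. Compute $\mathrm{ad}(L_{0,j})a=[t^{q+j},x^{-1}F(t)]$ and read off the top $t$-monomial: it equals $\lambda(j)\,c_{k}\,x^{-1}t^{q+j+k}$ with $\lambda(j)=(p-1)(q+j)-p(q+k)$. Since $\lambda(j)\neq 0$ for all but at most one $j\geq 0$, the span of iterated brackets $\mathrm{ad}(L_{0,j_{1}})\cdots\mathrm{ad}(L_{0,j_{s}})a$ contains $x^{-1}t^{q+m}$ for every sufficiently large $m$, so $\mathcal{P}(p,q,a)_{-1}$ has finite codimension in $\mathcal{B}(p,q)_{-1}$ and is in particular nonzero. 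For $n\geq 2$ I induct on $n$ via $\mathcal{P}(p,q,a)_{-n}=[\mathcal{P}(p,q,a)_{-1},\mathcal{P}(p,q,a)_{-n+1}]$ and the formula $[x^{-1}f,x^{-n+1}g]=x^{-n}t^{1-q}((p-n+1)f'g-(p-1)fg')$, again reading the top $t$-monomial to extract a cofinite collection of monomials $x^{-n}t^{q+m}$. Part (b) is then immediate: by (a) choose $a\in\mathcal{P}(p,q)_{-1}$ with $\mathcal{P}(p,q,a)\subseteq\mathcal{P}(p,q)$, and (c) gives $\mathcal{P}(p,q,a)_{\alpha}\subseteq\mathcal{P}(p,q)_{\alpha}\subseteq\mathcal{B}(p,q)_{\alpha}$ with the outer inclusion of finite codimension, whence the middle space is both nontrivial and cofinite.

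The main obstacle will be the inductive step of (c) at depths $n\geq 2$: one must verify that the two $n$-dependent prefactors $(p-n+1)$ and $(p-1)$ in $[x^{-1}f,x^{-n+1}g]$ do not conspire to annihilate the leading term when $f,g$ range over the finite-codimension spaces supplied by the induction hypothesis. The edge cases where one of these prefactors vanishes for exceptional values of $p$ (for instance the coefficient $(p-1)$ at $n=2$, which would force $[\mathcal{B}(p,q)_{-1},\mathcal{B}(p,q)_{-1}]=0$) are the most delicate point and must be handled either by invoking genericity of $(p,q)\in(\mathbb{C}^{*})^{2}$ or by judicious choice of monomial $f,g$ so that the next-to-leading terms take over and still yield a nonzero cofinite span.
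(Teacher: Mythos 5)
Your route for part (a) is genuinely different from the paper's. The paper climbs one degree at a time: assuming $\mathcal{P}(p,q)_{\alpha}\neq 0$ with $\alpha\leq -2$, it brackets an arbitrary $b=\sum_{i\in I}b_{i}L_{\alpha,i}$ with a single well-chosen $L_{1,j_{0}}$ and uses the linear independence of the outputs $L_{\alpha+1,i+j_{0}}$ (distinct second indices) to force every $b_{i}$ to vanish, so $\mathcal{P}(p,q)_{\alpha+1}\neq\{0\}$; once the induction reaches degree $-1$ it concludes, exactly as you do, by minimality of $\mathcal{P}(p,q,a)$. You instead jump from degree $\alpha_{0}$ straight to degree $-1$ by bracketing with $\mathcal{B}(p,q)_{-\alpha_{0}-1}$ and tracking the leading $t$-coefficient; that is a legitimate shortcut. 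For (b) and (c) the comparison is lopsided: the paper gives no argument at all (it refers to the $p=0$ case treated in \cite{KCB}), whereas you sketch the natural leading-term/cofiniteness argument, which is indeed the method of that reference. One small imprecision: the iterated brackets give you elements of $\mathcal{P}(p,q,a)_{-1}$ \emph{with leading terms} $x^{-1}t^{q+m}$, not those monomials themselves; this still yields finite codimension, but say it that way.

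The genuine gap is the one you flag at the end and do not close, and neither of your proposed remedies can close it. At $p=1$ the structure constant $(p-1)(i-j)$ makes $[\mathcal{B}(p,q)_{-1},\mathcal{B}(p,q)_{-1}]$ vanish identically, so by the inductive definition $\mathcal{P}(p,q,a)_{-2}=[\mathcal{P}(p,q,a)_{-1},\mathcal{P}(p,q,a)_{-1}]=0$ for every $a$: no ``judicious choice of $f,g$'' can extract a nonzero leading term from an identically zero bracket, and ``genericity of $(p,q)$'' is not available because the lemma is asserted for all $p,q\in\mathbb{C}^{*}$. A second exceptional configuration hits your part (a): if $p$ is a positive integer and $q$ a negative integer, then $L_{-p,-q}$ is central in $\mathcal{B}(p,q)$ (the coefficient $(i+q)(\beta+p)-(j+q)(\alpha+p)$ vanishes for all $\beta,j$ at $\alpha=-p$, $i=-q$), so if the nonzero $u\in\mathcal{P}(p,q)_{\alpha_{0}}$ you start from is a multiple of it, every bracket with $\mathcal{B}(p,q)_{-\alpha_{0}-1}$ is zero and no lower-order-term rescue exists; when $p=1$ and $q\in\mathbb{Z}_{<0}$ the central element $L_{-1,-q}$ even sits in degree $-1$, defeating the ``for any $0\neq a$'' clause of (c). To be fair, the paper shares these blind spots: its claim that $j_{0}$ can be chosen with $k_{\alpha}\neq 0$ for all $i\in\mathbb{Z}_{+}$ fails precisely when $\alpha=-p$ and $i=-q$, and it omits (b) and (c) altogether. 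But a complete proof must either exclude or separately treat these degenerate parameter values, and your plan as written leaves exactly that step open.
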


\begin{proof}
By definition, there exists at least one $\alpha<0$, such that $\mathcal{P}(p,q)_{\alpha}\neq0$.
 We claim that $\mathcal{P}(p,q)_{\alpha+1}\neq\{0\}$ if $\alpha\leq-2$.
 Otherwise, $[\mathcal{P}(p,q)_{\alpha},\mathcal{B}(p,q)_{1}]\subseteq\mathcal{P}(p,q)_{\alpha+1}=\{0\}$ and so that $[\mathcal{P}(p,q)_{\alpha},\mathcal{B}(p,q)_{1}]=\{0\}$.
Since $\alpha<0$, we can easily choose some positive integer $j_{0}$ such that  $k_{\alpha}=p(i-j_{0})-\alpha(q+j_{0})+q+i\neq0$
for all $i\in\mathbb{Z}_{+}$. Taking any $0\neq b=\sum_{i\in\mathbb{Z}}b_{i}L_{\alpha,i}$
in $\mathcal{P}(p,q)_{\alpha}$,   where $I$ is a finite subset of $\mathbb{Z}_{+}$ and $b_{i}\in\mathbb{C}$. We have
\begin{equation}
0=[b,L_{1,j_{0}}]=[\sum_{i\in I}b_{i}L_{\alpha,i},L_{1,j_{0}}]
=\sum_{i\in I}b_{i}[L_{\alpha,i},L_{1,j_{0}}]=\sum_{i\in I}b_{i}k_{\alpha}L_{\alpha+1,1+j_{0}}.
\end{equation}
Thus, we have $b_{i}=0$  for all $i\in I$. In the other word, $b=0$, this contradicts $b\neq0$. This proves the claim.
Therefore  $\mathcal{P}(p,q)_{-1}\neq\{0\}$ by induction. Taking any nonzero element $a\in\mathcal{P}(p,q)_{-1}$, note that
$\mathcal{P}(p,q,a)$ is the minimal  parabolic subalgebra, so we have $\mathcal{P}(p,q,a)\subseteq\mathcal{P}(p,q)$. The proof of (\ref{34a}) is completed.
One can prove parts (\ref{34b}) and (\ref{34c}) in a similar way as in \cite{KCB}, and the details are omitted.
\end{proof}

Suppose $\Lambda\in\mathcal{B}(p,q)_{0}^{*}$ satisfies $\Lambda|_{\mathcal{B}(p,q)_{0}\cap[\mathcal{P}(p,q),\mathcal{P}(p,q)]}=0$, we have for all $\alpha<0$, the action of $\mathcal{P}(p,q)_{\alpha}$ on $v_{\Lambda}$ is zero. Then we can define a highest weigh $\mathcal{B}(p,q)$-module as follows:
$$M(\mathcal{P}(p,q),\Lambda)=U(\mathcal{B}(p,q))\otimes U(\mathcal{P}(p,q))\mathbb{C}v_{\Lambda}.$$
Then $M(\mathcal{P}(p,q),\Lambda)$ is called the generalized $Verma$ module of $\mathcal{B}(p,q)$.

\begin{lemma}\label{lemm11}
The following conditions on $\Lambda\in\mathcal{B}(p,q)_{0}^{*}$ are equivalent.

\begin{enumerate}
\item \label{35a}
$L(\Lambda)$ is quasifinite;

\item \label{35b}
There exists an element $0\neq a\in\mathcal{B}(p,q)_{-1}$, such that $\Lambda(\mathcal{B}(p,q,a)_{0})=0$;

\item \label{35c}
$M(\Lambda)$ contains a singular vector $a\cdot v_{\Lambda}\in M(\Lambda)_{-1}$, where $0\neq a\in\mathcal{B}(p,q)_{-1}$;

\item \label{35d}
There exists an element $0\neq a\in\mathcal{B}(p,q)_{-1}$, such that $L(\Lambda)$ is
an irreducible quotient of the generalized $Verma$-module $M(\mathcal{P}(p,q,a),\Lambda)$.
\end{enumerate}
\end{lemma}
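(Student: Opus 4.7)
The plan is to establish the cycle (a) $\Rightarrow$ (b) $\Rightarrow$ (c) $\Rightarrow$ (b) $\Rightarrow$ (d) $\Rightarrow$ (a), using the ``tight'' part (a) $\Leftrightarrow$ (b) as the pivot. All the equivalences rest on the single identity
\[
\mathcal{B}(p,q,a)_{0}=[a,\mathcal{B}(p,q)_{1}]=\mathcal{B}(p,q)_{0}\cap[\mathcal{B}(p,q,a),\mathcal{B}(p,q,a)],
\]
already noted in the text, which translates ``$\Lambda$ vanishes on $\mathcal{B}(p,q,a)_0$'' into ``the commutators $[b,a]$ act by zero on $v_{\Lambda}$ for all $b\in\mathcal{B}(p,q)_{1}$''.

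For (b) $\Leftrightarrow$ (c), I would observe that in $M(\Lambda)$ the vector $a\cdot v_{\Lambda}\in M(\Lambda)_{-1}$ is singular iff $\mathcal{B}(p,q)_{n}\cdot a\cdot v_{\Lambda}=0$ for every $n>0$. For $n\geq 2$ this is automatic, since $[\mathcal{B}(p,q)_{n},a]\subseteq\mathcal{B}(p,q)_{n-1}$ with $n-1\geq 1$ already annihilates $v_{\Lambda}$. For $n=1$ one gets $b\cdot a\cdot v_{\Lambda}=[b,a]v_{\Lambda}=\Lambda([b,a])v_{\Lambda}$, so vanishing is exactly $\Lambda([\mathcal{B}(p,q)_{1},a])=\Lambda(\mathcal{B}(p,q,a)_{0})=0$. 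Next, for (b) $\Rightarrow$ (d), under the hypothesis the one-dimensional $\mathcal{B}(p,q)_{0}$-module $\mathbb{C}v_{\Lambda}$ lifts to a $\mathcal{P}(p,q,a)$-module (let $\mathcal{P}(p,q,a)_{+}$ act trivially and use (b) to make $\mathcal{P}(p,q,a)_{-\alpha}\cdot v_{\Lambda}=0$ for $\alpha>0$ by induction from the recursion in Definition~\ref{def11}(\ref{33b})). Inducing gives $M(\mathcal{P}(p,q,a),\Lambda)$, which surjects onto $L(\Lambda)$.

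For (d) $\Rightarrow$ (a), I would invoke nondegeneracy: by the previous lemma, each $\mathcal{P}(p,q,a)_{-n}$ has finite codimension in $\mathcal{B}(p,q)_{-n}$. By PBW, $M(\mathcal{P}(p,q,a),\Lambda)_{-n}$ is spanned by ordered monomials in representatives of $\mathcal{B}(p,q)_{-k}/\mathcal{P}(p,q,a)_{-k}$ for $k\geq 1$ whose total degree is $-n$; there are only finitely many such monomials, so each weight space is finite-dimensional. This finiteness passes to the quotient $L(\Lambda)$.

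The main obstacle is (a) $\Rightarrow$ (b). The idea is to use that $\dim L(\Lambda)_{-1}<\infty$ forces the map
\[
\varphi:\mathcal{B}(p,q)_{-1}\longrightarrow L(\Lambda)_{-1},\qquad a\mapsto a\cdot v_{\Lambda},
\]
to have a nontrivial kernel, since $\mathcal{B}(p,q)_{-1}$ is infinite-dimensional. Pick any $0\neq a\in\ker\varphi$; then $a\cdot v_{\Lambda}$ lies in the maximal proper $\mathbb{Z}$-graded submodule $N$ of $M(\Lambda)$. For any $b\in\mathcal{B}(p,q)_{1}$ we get $b\cdot a\cdot v_{\Lambda}=\Lambda([b,a])v_{\Lambda}\in N\cap M(\Lambda)_{0}$, but $v_{\Lambda}\notin N$ since it generates $M(\Lambda)$, hence $\Lambda([b,a])=0$ for all $b\in\mathcal{B}(p,q)_{1}$, i.e.\ $\Lambda(\mathcal{B}(p,q,a)_{0})=0$. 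The delicate point to verify carefully is that $\mathcal{B}(p,q)_{-1}$ is genuinely infinite-dimensional (which is immediate from its basis $\{L_{-1,i}\mid i\geq 0\}$) and that no $L_{-1,i}$ is itself singular in $M(\Lambda)$ for generic $\Lambda$ -- otherwise the kernel argument needs to be recast using the weight-space dimension comparison rather than pointwise vanishing. This completes the cycle.
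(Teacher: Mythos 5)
Your proposal is correct and is essentially the argument the paper itself intends: the paper omits the proof entirely, referring to Lemma 3.1 and Theorem 2.5 of \cite{Os1}, and your cycle --- (a)$\Rightarrow$(b) via the kernel of $\varphi\colon a\mapsto a\cdot v_{\Lambda}$ on the infinite-dimensional space $\mathcal{B}(p,q)_{-1}$, (b)$\Leftrightarrow$(c) by the degree-one bracket computation, (b)$\Rightarrow$(d) by making $\mathbb{C}v_{\Lambda}$ a one-dimensional $\mathcal{P}(p,q,a)$-module and inducing, and (d)$\Rightarrow$(a) by PBW together with the nondegeneracy of every nonzero $a\in\mathcal{B}(p,q)_{-1}$ from the preceding lemma --- is exactly that standard Kac--Radul/Kac--Liberati argument, so you have in effect supplied the details the paper leaves out. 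Two minor remarks: your closing worry about whether some $L_{-1,i}$ is itself singular is a red herring, since (a)$\Rightarrow$(b) only needs some nonzero element of $\ker\varphi$ (and a singular $L_{-1,i}$ would yield (b) directly); and in (b)$\Rightarrow$(d) the clean justification is not an induction on the recursion defining $\mathcal{P}(p,q,a)_{\alpha}$ but simply the check that the extension of $\Lambda$ by zero kills $[\mathcal{P}(p,q,a),\mathcal{P}(p,q,a)]\cap\mathcal{B}(p,q)_{0}=\mathcal{B}(p,q,a)_{0}$, which is precisely condition (b) combined with the identity stated before the lemma.
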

\begin{proof}
One can prove this lemma in a similar way as in Lemma 3.1 and theorem 2.5 of \cite{Os1}, and the details are omitted.
\end{proof}

{\bf The proof of Theorem \ref{theo123}:}
\begin{proof}
We use notation $e^{zt}=\sum_{i=0}^{\infty}\frac{z^{i}}{i!}t^{i}$ as generating series of $\mathbb{C}[t]$.
For all $f(t)\in \mathbb{C}[t]$, we have $f(t)e^{zt}=f(\frac{\partial}{\partial z})e^{zt}$.
For all $f(t)\in t^{q}\mathbb{C}[t]$, letting $\widetilde{f}(t)\doteq t^{-q}f(t)\in\mathbb{C}[t]$.
Thus $f(t)=\widetilde{f}(\frac{\partial}{\partial z})(t^{q}e^{zt})$.
Therefore, due to Lemma \ref{lemm11}(\ref{35b}),(\ref{35c}), when $L(\Lambda)$ is quasifinite we can find $0\neq f(t)\in t^{q}\mathbb{C}[t]$ with
$g(t)\in t^{q}\mathbb{C}[t]$ such that
\begin{equation}\label{17}
\Lambda(((f(t)g(t))^{\prime}+p\cdot g^{2}(t)\cdot(\frac{f(t)}{g(t)})^{\prime})t^{1-q})=0,
\end{equation}
where the prime stands for the partial derivative $\frac{\partial}{\partial t}$. Taking $g(t)=t^{q}e^{zt}$, we deduce
\begin{eqnarray}\label{18}
&&\Lambda(((f(t)g(t))^{\prime}+p\cdot g^{2}(t)\cdot(\frac{f(t)}{g(t)})^{\prime})t^{1-q}) \nonumber\\
&= &\Lambda((((f(t)t^{q}e^{zt})^{\prime}+p\cdot(t^{q}e^{zt})^{2}(\frac{f(t)e^{zt}}{t^{q}e^{2zt}})^{\prime})t^{1-q})\nonumber\\
&=&\Lambda(\widetilde{f}(\frac{\partial}{\partial z})((2q+zt)t^{q}e^{zt}-p^{2}t^{q}zt e^{zt}))\\
&=&\widetilde{f}(\frac{d}{dz})\Lambda((2q+(1-p^{2})zt)t^{q}e^{zt})=\widetilde{f}(\frac{d}{dz})\Delta_{\Lambda}(z,p,q).\nonumber
\end{eqnarray}
So we have by (\ref{17}) and (\ref{18}) that $\widetilde{f}(\frac{d}{dz})\Delta_{\Lambda}(z,p,q)=0$.
A well known fact \cite{[Cay]} stated that a formal power series is a quasipolynomial if
and only if it satisfies a nontrivial linear differential equation with constant coefficient. It is easy to see that
$\Delta_{\Lambda}(z,p,q)$ is a quasipolynomial.

Conversely, if $\Delta_{\Lambda}(z,p,q)$ is quasipolynomial, there exists $0\neq h(t)\in \mathbb{C}[t]$, such that
$h(\frac{d}{dz})\Delta_{\Lambda}(z,p,q)=0$. Denote $f(t)=t^{q}h(t)\in t^{q}\mathbb{C}[t]$,
then $\widetilde{f}(\frac{d}{dz})\Delta_{\Lambda}(z,p,q)=0$. It is follows by (\ref{18}) that
\begin{eqnarray*}
0&=&\Lambda(((f(t)g(t))^{\prime}+p(f^{\prime}(t)g(t)-g^{\prime}(t)f(t)))t^{1-q})\\
&=&\sum_{i=0}^{\infty}\frac{z^{i}}{i!}\Lambda(((f(t)t^{q+i})^{\prime}+p(f^{\prime}(t)t^{q+i}-(t^{q+i})^{\prime}f(t)))t^{1-q}),
\end{eqnarray*}
which yields that$\Lambda(((f(t)t^{q+i})^{\prime}+p(f^{\prime}(t)t^{q+i}-(t^{q+i})^{\prime}f(t)))t^{1-q})=0$  for all $i\in\mathbb{Z}_{+}$.
Hence, we have $\Lambda(((f(t)g(t))^{\prime}+p(f^{\prime}(t)g(t)-g^{\prime}(t)f(t)))t^{1-q})=0$ for all $g(t)\in t^{q}\mathbb{C}[t]$.
Due to Lemma \ref{lemm11}(\ref{35b}) and (\ref{16}), we deduce that $L(\Lambda)$ is quasifinite. The proof is completed.
\end{proof}

Here there is a problem in nature, is there the other quasifinite irreducible module of $\mathcal{B}(p,q)$ except uniformly bounded or
a highest (resp.lowest) weight of module? We will answer it in the next section.

\section{The classification of qusifinite module}\label{classi}

Motivated by a well-known result of Mathieu's in \cite{Mathieu}, it is natural to consider
the classification of quasifinite irreducible $\mathcal{B}(p,q)$-module, we shall prove the  main results as follows:
\begin{theorem}\label{theo}
A quasifinite irreducible $\mathcal{B}(p,q)$-module is either a highest/lowest weight module, or a uniformly bounded
module.
\end{theorem}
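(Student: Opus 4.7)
The plan is to take a quasifinite irreducible $\mathcal{B}(p,q)$-module $V=\oplus_{\alpha\in\mathbb{Z}}V_\alpha$ which is neither a highest nor a lowest weight module, and show that it must be uniformly bounded.

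As a first step I would analyze the support $\Omega=\{\alpha:V_\alpha\neq 0\}$. The assumptions force $\Omega$ to be unbounded above and below, and because the bracket coefficient $(i+q)(\beta+p)-(j+q)(\alpha+p)$ vanishes on only finitely many pairs, the map $L_{\alpha,0}\colon V_\beta\to V_{\alpha+\beta}$ is generically nonzero; together with $\mathcal{B}(p,q)$-irreducibility this forces $\Omega$ to be a full coset of $\mathbb{Z}$, which I may take to be all of $\mathbb{Z}$ after a harmless shift of grading.

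The core of the argument is then to exploit the Virasoro subalgebra $\mathrm{Vir}\subset\mathcal{B}(p,q)$ generated by $\{L_{\alpha,0}\}$ and $c$ (available because $q\neq 0$). Restricted to $\mathrm{Vir}$, $V$ is $\mathbb{Z}$-graded with finite-dimensional weight spaces, so Mathieu's classification applies to each irreducible Vir-subquotient: it is either of highest weight, of lowest weight, or of intermediate series (dimension $\leq 1$ per grade). If every irreducible Vir-subquotient is of the intermediate series, then $\dim V_\alpha$ is bounded by the Jordan--H\"older length of $V$ in any fixed grade, finite by quasifiniteness, and uniform across grades because $\mathrm{Vir}$ acts transitively via the $L_{\alpha,0}$. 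This would finish the proof.

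What remains is to rule out highest and lowest weight Vir-subquotients. My idea is to use the commutator identity $[L_{1,0},L_{\alpha-1,j}]=(q(\alpha-1+p)-(j+q)(1+p))L_{\alpha,j}$, which is nonzero for all $\alpha$ except one linear exception per $j$, to inductively express each $L_{\alpha,j}$ (with $\alpha>0$, $j>0$) as an iterated Vir-commutator. Annihilation by all $L_{*,0}$ of positive first index should then propagate to annihilation by all of $\mathcal{B}(p,q)_+$, producing a $\mathcal{B}(p,q)$-highest-weight vector in $V$ and contradicting the hypothesis; the lowest weight case is symmetric. The main obstacle in this last step is that a $\mathrm{Vir}$-subquotient of $V$ need not be $\mathcal{B}(p,q)$-stable, so the annihilation statements must be promoted from ``modulo a Vir-submodule'' to genuine identities in $V$; this is where the nondegeneracy and parabolic-subalgebra analysis of $\mathcal{P}(p,q,a)$ developed earlier in the paper should play a central role.
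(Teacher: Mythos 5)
Your route through Mathieu's theorem is genuinely different from the paper's argument, but as it stands it has a gap precisely at the step you yourself flag as the ``main obstacle,'' and the mechanism you propose there would fail. The subalgebra of $\mathcal{B}(p,q)$ generated by $\{L_{\alpha,0}\mid \alpha>0\}$ is just its linear span (the positive part of $\mathrm{Vir}$), since $[L_{\alpha,0},L_{\beta,0}]=q(\beta-\alpha)L_{\alpha+\beta,0}$ modulo the center; it contains no $L_{\alpha,j}$ with $j>0$. Your identity $[L_{1,0},L_{\alpha-1,j}]\propto L_{\alpha,j}$ only propagates annihilation along $\alpha$ for a \emph{fixed} $j>0$, and therefore needs a seed vector of the form $L_{\beta,j}v=0$ for some $\beta$, which no amount of $\mathrm{Vir}$-information about $v$ can supply. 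This is exactly where the paper's proof does its real work: from the failure of injectivity of $\theta_{\mu_0}=(L_{-\mu_0,0}\oplus L_{-\mu_0,1}\oplus L_{-\mu_0,2}\oplus L_{-\mu_0+1,0})|_{V_{\mu_0}}$ one gets a vector killed by elements with second index $1$ and $2$, and quasifiniteness supplies, via the linear dependence of $c,L_{0,0},\dots,L_{0,m-1}$ on $V_{\mu_0}$, the further seeds $L_{0,m-1}$; Lemma \ref{lemm21} then shows the subalgebra generated by these few elements swallows all of $\mathcal{B}(p,q)_{[\Gamma,+\infty)}$, and Lemma \ref{lem23} converts this into a genuine highest weight vector, contradiction. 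Without an analogue of those seeds, your ``promotion'' from $\mathrm{Vir}$-annihilation to $\mathcal{B}(p,q)_+$-annihilation cannot get started, and gesturing at the parabolic subalgebras $\mathcal{P}(p,q,a)$ does not fill this in, since those were used in Section \ref{exmaple} for highest weight modules where a genuine singular vector already exists.

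There are two further soft spots even before that step. First, Mathieu's classification concerns irreducible weight modules, i.e.\ modules on which $L_0$ (and $c$) act semisimply with finite-dimensional eigenspaces; the $\mathbb{Z}$-grading of $V$ is compatible with the grading of $\mathcal{B}(p,q)$ but is not given as the eigenspace decomposition of $L_{0,0}$, so applying Mathieu to $\mathrm{Vir}$-subquotients of $V|_{\mathrm{Vir}}$ needs an extra argument (generalized eigenspaces, finite length in a window, etc.). Second, the trichotomy you aim for is not quite the right one: a uniformly bounded $V$ can perfectly well contain highest weight $\mathrm{Vir}$-subquotients (for instance trivial ones), so ``rule out highest/lowest weight $\mathrm{Vir}$-subquotients'' is stronger than what is true, and the counting argument for the intermediate-series case would have to be reorganized around bounded versus unbounded subquotients instead. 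By contrast, the paper's proof never restricts to $\mathrm{Vir}$ at all: it directly bounds $\dim V_{\mu}$ by $3\dim V_0+\dim V_{\pm1}$ through the injectivity of $\theta_{\mu_0}$, which is both more elementary and avoids all of the lifting issues above.
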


Note that $\mathcal{B}(p,q)_{0}={\rm Span} \{L_{0,i}|i\in \mathbb{Z},i\geq 0\}\oplus\mathbb{C}c$ is an infinite dimensional commutative subalgebra of $\mathcal{B}(p,q)$
(but not a Cartan subalgebra). Suppose $V=\oplus_{\mu\in \mathbb{Z}}V_{\mu}$ is a quasifinite $\mathcal{B}(p,q)$-module. Taking $\mu_{0}\in Z^{*}$,
Then $c|_{V_{\mu_0}}$ (the action of $c$ on $V_{\mu_{0}}$) and $L_{0,i}|_{{V_{\mu_{0}}}},i\in Z_{+}$  are linear transformations of the finite
dimensional subspace $V_{\mu_{0}}$. So there exists big enough fixed interer $p_{0}$ such that$c|_{v_{\mu_0}}, L_{0,0}|_{v_{\mu_0}}, \cdots, L_{o,m-1}|_{V_{\mu_{0}}}$ are linear dependent for all $m\geq p_{0}$. Therefore, for any $m\geq p_{0}$, we have
$a_{0}c|_{V_{\mu_{0}}}+a_{1}L_{0,0}|_{V_{\mu_{0}}}+\cdots +a_{m+1}L_{0,m-1}|_{V_{\mu_{0}}}=0$ for some not all zero complex numbers $a_0, a_1, \cdots, a_{m-1}$. That is, for all $v\in V_{\mu_{0}}$,
\begin{equation}\label{100}
(a_{0}c|_{V_{\mu_{0}}}+a_{1}L_{0,0}|_{V_{\mu_{0}}}+\cdots +a_{m-1}L_{0,m-1}|_{V_{\mu_{0}}})v=0.
\end{equation}

Now we define the Lie subalgebra $L(p,q,\mu_{o})$ of $\mathcal{B}(p,q)$ as follows
$$
L(p,q,\mu_{o})=
\left\{ \begin{array}{ll}
\langle L_{-\mu_{0},0},L_{-\mu_{0},1},L_{-\mu_{0},2},L_{-\mu_{0}+1,0},L_{0,m-1}|m\geq p_{0}\rangle, & \ {\mbox if}\ \mu_{0}\leq-1,\\
\\
\\
\langle L_{-\mu_{0},0},L_{-\mu_{0},1},L_{-\mu_{0},2},L_{-\mu_{0}-1,0},L_{0,m-1}|m\geq p_{0}\rangle, & \ {\mbox if}\ \mu_{0}\geq ~1,
\end{array}
\right.
$$
where the angle bracket $\langle, \rangle$ stands for ``the Lie subalgebra generated by".

\begin{lemma}\label{lemm21}
For any $n\geq 1$, and fixed $\mu_{0}\in Z^{*}$, we have

\begin{enumerate}
\item \label{42a}
if $\mu_{0}\leq-1$,
then there exists $\alpha_{n}\in\mathbb{Z_{+}^{*}}$ such that
$L_{\alpha,n-1}\in L(p,q,\mu_{0})$ for all $\alpha\geq\alpha_{n};$

\item \label{42b}
 if  $\mu_{0}\geq ~1$, then there exists $\alpha_{n}\in\mathbb{Z_{-}^{*}}$ such that $L_{\alpha,n-1}\in L(p,q,\mu_{0})$
for all $\alpha\leq\alpha_{n}$.
\end{enumerate}
\end{lemma}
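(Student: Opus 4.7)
The plan is to prove case (a) by induction on $n$; case (b) is completely symmetric, using $L_{-\mu_0-1,0}$ in place of $L_{-\mu_0+1,0}$ and moving to negative first indices. Write $\alpha_0 := -\mu_0 \geq 1$, so the generators of $L(p,q,\mu_0)$ include $L_{\alpha_0,0}$, $L_{\alpha_0,1}$, $L_{\alpha_0,2}$, $L_{\alpha_0+1,0}$ and $L_{0,m-1}$ for $m \geq p_0$. The overall idea is that iterated brackets among $L_{\alpha_0,0}$ and $L_{\alpha_0+1,0}$ produce $L_{\beta,0}$ for all sufficiently large $\beta$ (handling $n=1$), after which subsequent brackets against the other generators lift the result to arbitrary second index $n-1$.

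For the base case $n=1$, I would first compute
\[ [L_{\alpha_0,0}, L_{k\alpha_0+1,0}] = q\bigl((k-1)\alpha_0 + 1\bigr)\, L_{(k+1)\alpha_0+1,\,0}, \]
so induction on $k \geq 1$ puts every $L_{k\alpha_0+1,0}$ into $L(p,q,\mu_0)$. Bracketing these in turn against $L_{\alpha_0+1,0}$ and iterating similarly yields $L_{m\alpha_0+r,0} \in L(p,q,\mu_0)$ for every $1 \leq r \leq \alpha_0$ with $m \geq r+1$. Since every integer $N \geq \alpha_0^2+2\alpha_0$ admits such a representation (this is where $\gcd(\alpha_0,\alpha_0+1)=1$ is used), one may take $\alpha_1 := \alpha_0^2+2\alpha_0$ to get $L_{\alpha,0} \in L(p,q,\mu_0)$ for all $\alpha \geq \alpha_1$.

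For the inductive step with $n \in \{2,3\}$, the generator $L_{\alpha_0,n-1}$ is still directly available, and the bracket
\[ [L_{\alpha,0}, L_{\alpha_0,n-1}] = \bigl(q(\alpha_0+p) - (n-1+q)(\alpha+p)\bigr)\, L_{\alpha+\alpha_0,\,n-1} \]
with $\alpha \geq \alpha_1$ vanishes for at most one exceptional $\alpha$, yielding $L_{\gamma,n-1}$ for all large $\gamma$. For $n \geq 4$ I would split $n-1 = 1 + (n-2)$ and form $[L_{\alpha,1}, L_{\beta,n-2}]$, both factors lying in $L(p,q,\mu_0)$ by the inductive hypothesis. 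Writing $\gamma = \alpha+\beta$, the bracket coefficient is affine in $\beta$ with slope $n-1+2q$, so for each large $\gamma$ one picks $\beta$ avoiding the unique zero; the exceptional case $q = -(n-1)/2$ is handled either by switching to another decomposition $n-1=i+j$ or, whenever $n-1 \geq p_0-1$, by first producing $L_{\alpha_0,n-1}$ from $[L_{0,n-1},L_{\alpha_0,0}]$ and then repeating the $n \in \{2,3\}$ argument.

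The main obstacle is the base case: one must guarantee that iterated brackets never inadvertently pair an element with itself (which would vanish by antisymmetry) and must carefully track which first-indices have actually been realized, which is a numerical-semigroup bookkeeping argument leveraging the coprimality of $\alpha_0$ and $\alpha_0+1$. A secondary technical concern is ruling out sporadic vanishing of bracket coefficients at each inductive step, handled by choosing appropriate decompositions of $n-1$ as $i+j$ and exploiting the affineness of the coefficient in each of its variables.
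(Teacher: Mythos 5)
Your overall strategy coincides with the paper's: induction on $n$, base case $n=1$ by realizing every large $\alpha$ through the coprime pair $-\mu_0$ and $1-\mu_0$ (the paper's iterated-adjoint formula $\mathrm{ad}_{z_2}^{l_2-1}\mathrm{ad}_{z_1}^{l_1}(z_2)$ with $z_1=L_{-\mu_0+1,0}$, $z_2=L_{-\mu_0,0}$ is exactly your numerical-semigroup computation), and an inductive step that raises the second index by one bracket while watching for vanishing coefficients. The only structural difference is that the paper always brackets the induction output with the fixed generator $L_{-\mu_0,1}$ (switching to $L_{-\mu_0,2}$ only in the exceptional case $s=3$, $q=-1$), whereas for $n\ge 4$ you bracket two induction-produced elements $[L_{\alpha,1},L_{\beta,n-2}]$, gaining a free parameter $\beta$ for fixed $\gamma=\alpha+\beta$.

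Two of your coefficient claims, however, are wrong as stated, and one of them is a genuine gap relative to the paper. Write $\alpha_0=-\mu_0$. For $n\in\{2,3\}$ the coefficient of $[L_{\alpha,0},L_{\alpha_0,n-1}]$ is $q(\alpha_0+p)-(n-1+q)(\alpha+p)$; when $q=-(n-1)$ this is not ``affine with at most one zero'' but the constant $q(\alpha_0+p)$, which vanishes identically exactly when $p=\mu_0$ (an admissible value, since $p$ is only assumed nonzero). In particular for $n=3$, $q=-2$, $p=\mu_0$ your step produces nothing, while the paper's step $[L_{\alpha+\mu_0,1},L_{-\mu_0,1}]$ has coefficient $(q+1)(\alpha+2\mu_0)$, which does not involve $p$ and survives; your scheme can be repaired by the split $[L_{\alpha,1},L_{\beta,1}]$, whose coefficient $(1+q)(\beta-\alpha)$ is nonzero, but you did not provide this. (For $n=2$, $q=-1$, $p=\mu_0$ both your argument and the paper's collapse -- there $[L_{\beta,0},L_{-\mu_0,1}]$ has coefficient $q(p-\mu_0)-(1+q)(\beta+p)=\mu_0-p=0$ -- so that configuration is a defect shared with the paper, not specific to you.) Second, your fix for the degenerate slope $q=-(n-1)/2$ by ``switching to another decomposition $n-1=i+j$'' cannot work for the reason you give: the slope in $\beta$ of the coefficient of $[L_{\alpha,i},L_{\beta,j}]$ with $\gamma=\alpha+\beta$ fixed is $i+j+2q=n-1+2q$, independent of the decomposition. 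What actually rescues this case is that when $n-1+2q=0$ the coefficient collapses to $(i+q)(\gamma+2p)$, which for a decomposition with $i\neq (n-1)/2$ is nonzero for all but at most one $\gamma$, and that single $\gamma$ can be absorbed into the threshold $\alpha_n$; as written your justification is incorrect, and your $[L_{0,n-1},L_{\alpha_0,0}]$ fallback only covers $n\ge p_0$ (and itself degenerates when $p=\mu_0/2$).
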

\begin{proof}
We only prove part (\ref{42a}) by induction on $n$ (part (\ref{42b}) can be proved similarly).
In case $n=1$, using the same way of Lemma 2.1 in  \cite{KCB}, we have for any integer $\alpha\geq(1-\mu_{0})^{2}$, there exists two positive integer $l_{1},l_{2}$
such that
\begin{equation}
\alpha=l_{1}(1-\mu_{0})-l_{2}\mu_{0},
\end{equation}
where $l_{1}\doteq\alpha+(k_{0}+1)\mu_{0}\geq k_{0}(1-\mu_{0})+(k_{0}+1)\mu_{0}$, $l_{2}\doteq(k_{0}+1)(1-\mu_{0})-\alpha>0$ and
$k_{0}\doteq[\frac{\alpha}{1-\mu_{0}}]$.

 Letting $z_{1}=L_{-\mu_{0}+1,0},z_{2}=L_{-\mu_{0},0}$, taking
$\alpha_{1}=(1-\mu_{0})^{2}$, by  induction on $l_{1},l_{2}$,
we get \begin{equation}\label{26}
ad_{z_{2}}^{l_{2}-1}ad_{z_{1}}^{l_{1}}(z_{2})=q^{l_{1}+l_{2}-1}\prod_{i=1}^{l_{1}}(-(i-1)\mu_{0}+i-2)
\prod_{j=1}^{l_{2}-1}(-(l_{1}+j-1)\mu_{0}+l_{1})L_{\alpha,0}.
\end{equation}
Note that the coefficient of $L_{\alpha,0}$ on the right-hand side of (\ref{26}) is nonzero.
Thus $L_{\alpha,0}\in L(p,q,\mu_{0})$. Now suppose $n=s-1$, there exists an integer $\alpha_{s-1}\in Z_{+}^{*}$, such that $L_{\alpha,s-2}\in L(p,q,\mu_{0})$
for all $\alpha\geq\alpha_{s-1}$. Thus, when $n=s$, letting $r_{\alpha,p,q}=\alpha(q+2)+\mu_{0}(s+2q-1)-p(s-5)$.
If $s=3,q=-1$ then $r_{\alpha,p,q}=\alpha-2p$, otherwise, letting $r_{\alpha,p,q}=\alpha(q+1)+\mu_{0}(s+2q-1)-p(s-3)$.
We can always choose big enough $\alpha_{s}^{\prime}$ such that $r_{\alpha,p,q}\neq0$ whenever $\alpha\geq\alpha_{s}^{\prime}$.
Now take $\alpha_{s}=max\{\alpha_{s-1},\alpha_{s}^{\prime}\}$, then for all $\alpha\geq\alpha_{s}$, there exists an integer $\alpha_{s}\in Z_{+}^{*}$, such that
$$
L_{\alpha,s-1}=
\left\{ \begin{array}{ll}
L_{\alpha,s-1}=-\frac{1}{r_{\alpha,p,q}}[L_{\alpha+\mu_{0},s-3},L_{-\mu_{0},2}], & \ {\mbox if}\ s=3,q=-1,\\
\\
\\
L_{\alpha,s-1}=-\frac{1}{r_{\alpha,p,q}}[L_{\alpha+\mu_{0},s-2},L_{-\mu_{0},1}], & \ {\mbox else},
\end{array}
\right.
$$
which shows $L_{\alpha,n-1}\in L(p,q,\mu_{0})$. Part (\ref{42a}) is proved.
\end{proof}

One can prove a lemma in a similar way as in \cite{BM1,BM2,Bai} as follows.

\begin{lemma}\label{lem23}
Let $V=\oplus_{\mu\in\mathbb{Z}}V_{\mu}$ be a quasifinite  irreducible $\mathcal{B}(p,q)$-module.

\begin{enumerate}
\item \label{43a}
if $\mu_{0}\leq-2$, and there exists $0\neq v_{0}\in V_{\mu_{0}}$ such that $\mathcal{B}(p,q)_{[\alpha,+\infty)}v_{0}=0$ for some $\alpha>0$,
 then $V$ has a highest weight vector.

\item \label{43b}
 if $\mu_{0}\geq~2$, and there exists $0\neq v_{0}\in V_{\mu_{0}}$ such that $\mathcal{B}(p,q)_{(-\infty,\alpha]}v_{0}=0$ for some $\alpha<0$,
 then $V$ has a lowest  weight vector.
 \end{enumerate}
\end{lemma}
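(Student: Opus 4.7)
The plan is to prove part (\ref{43a}) by a descending induction on $\alpha$; part (\ref{43b}) then follows by reversing the sign of the $\mathbb{Z}$-grading. Introduce the graded subspace
$$N_\alpha = \{v \in V \mid \mathcal{B}(p,q)_{[\alpha,+\infty)} v = 0\},$$
so that the hypothesis supplies $0 \neq v_0 \in N_\alpha \cap V_{\mu_0}$ with $\mu_0 \leq -2$. The goal is to produce a nonzero element of $N_1$, whose homogeneous components are automatically highest weight vectors of $V$.

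The base case $\alpha = 1$ is trivial. For $\alpha \geq 2$, the first simplification is that Lemma \ref{lemm21} reduces the condition $\mathcal{B}(p,q)_{[\alpha-1,+\infty)} v_1 = 0$ for a candidate $v_1$ to controlling the finitely many generators of the relevant subalgebra together with the family $\{L_{\alpha-1, i}\}_{i \geq 0}$. Applying the commutator formula
$$[L_{\beta,j}, L_{\alpha-1,i}] = ((j+q)(\alpha-1+p) - (i+q)(\beta+p)) L_{\alpha-1+\beta, i+j} + \delta_{\alpha-1+\beta,0}\delta_{i,0}\delta_{j,0} \frac{\beta^3-\beta}{12} c,$$
one verifies that for every $i \geq 0$ and every $\beta \geq \alpha$ one has $L_{\beta, j}(L_{\alpha-1, i} v_0) = 0$: the first summand vanishes because $L_{\beta, j} v_0 = 0$, the image index $\alpha - 1 + \beta \geq \alpha$ sends the second summand to $0$ on $v_0$, and the central delta never fires when $\beta \geq \alpha \geq 2$. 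Hence each $L_{\alpha-1, i} v_0 \in N_\alpha \cap V_{\mu_0 + \alpha - 1}$. I would then examine the linear map $\Phi : V_{\mu_0} \to \prod_{i \geq 0} (N_\alpha \cap V_{\mu_0 + \alpha -1})$ sending $v \mapsto (L_{\alpha-1, i} v)_i$. If $\ker \Phi \neq 0$, any nonzero element is the desired $v_1 \in N_{\alpha - 1} \cap V_{\mu_0}$. Otherwise $\Phi$ is injective into a finite-dimensional target (only finitely many coordinates are independent since the right-hand side is finite-dimensional), which is a genuine constraint on $\dim V_{\mu_0}$; one then transfers the argument to a suitable homogeneous image vector and pulls down by an element of $\mathcal{B}(p,q)_{-\gamma}$ to restore a sufficiently negative grading before iterating.

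The main obstacle is ensuring that the constructed $v_1$ is \emph{nonzero}. Overcoming it requires combining three ingredients: (i) the dimension bound from quasifiniteness, which caps how long injectivity of $\Phi$ can persist across iterations; (ii) the hypothesis $\mu_0 \leq -2$, which keeps the commutator coefficients $(i+q)(\beta+p)-(j+q)(\alpha-1+p)$ from vanishing identically in the relevant parameter ranges; and (iii) the pulldown by $L_{-\gamma, j} \in \mathcal{B}(p,q)_-$, which lowers the grading by $\gamma$ while increasing the annihilation depth by $\gamma$ — a controllable trade-off when composed with the previous step. Once the induction reaches $\alpha = 1$, any homogeneous component of the resulting vector is a highest weight vector of $V$, finishing (\ref{43a}); part (\ref{43b}) follows by the symmetric argument after reversing the sign of the $\mathbb{Z}$-grading.
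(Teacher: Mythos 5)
The opening of your argument is sound: for $v_0\in N_\alpha\cap V_{\mu_0}$ the computation $L_{\beta,j}(L_{\alpha-1,i}v_0)=0$ for $\beta\ge\alpha$ is correct (the image degree $\alpha-1+\beta\ge\alpha$ and the central delta cannot fire), and if some nonzero vector of $N_\alpha\cap V_{\mu_0}$ is killed by all $L_{\alpha-1,i}$ you may indeed descend to $N_{\alpha-1}$. (Note, though, that for this conclusion the kernel must be taken inside $N_\alpha\cap V_{\mu_0}$, not inside all of $V_{\mu_0}$ as your map $\Phi$ is written: a vector with $\mathcal{B}(p,q)_{\alpha-1}v=0$ but $v\notin N_\alpha$ is not in $N_{\alpha-1}$.) The genuine gap is the other branch of your dichotomy, which is exactly the content of the lemma. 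Injectivity of $\Phi$ is a statement about a linear map between two \emph{finite-dimensional} spaces and carries no tension whatsoever with quasifiniteness, so your ingredient (i) --- that the dimension bound ``caps how long injectivity can persist across iterations'' --- is unsupported: there is no quantity in your scheme that is shown to decrease. Worse, the proposed repair moves in the wrong direction: passing to an image vector in $V_{\mu_0+\alpha-1}$ and then ``pulling down'' by some $L_{-\gamma,j}$ raises the annihilation index from $\alpha$ to (at best) $\alpha+\gamma$ while lowering the weight by $\gamma$, so after one full cycle you are in a configuration no better than the one you started from; moreover you have not shown the pulled-down vector is nonzero, which is a question of the same difficulty as the lemma itself. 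In general the process ``no vector of $N_\alpha\cap V_\mu$ is killed by $\mathcal{B}(p,q)_{\alpha-1}$, so move to a nonzero image in a higher weight space and repeat'' can a priori continue forever; ruling this out is precisely what must be proved, and it cannot be done by the bookkeeping you describe. Symptomatic of this is that your sketch never genuinely uses the hypotheses that appear in the statement: the irreducibility of $V$ is never invoked, and $\mu_0\le-2$ is not used where you claim (the coefficients $(i+q)(\beta+p)-(j+q)(\alpha-1+p)$ do not involve $\mu_0$ at all).

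For comparison, the paper does not write out a proof of Lemma \ref{lem23}; it appeals to the arguments of \cite{BM1,BM2,Bai}, where the analogous statements are established using the fine structure of the algebra (operator identities of the kind underlying Lemma \ref{lemm21}) together with irreducibility and the sign conditions on $\mu_0$, not by the soft kernel/injectivity iteration you propose. If you want to complete a proof along your lines you must supply an actual termination mechanism for the case $\ker\Phi=0$, and this is where the cited structure results have to enter. Two smaller points: the homogeneous vector you produce in $N_1$ is killed by $\mathcal{B}(p,q)_+$ but is not yet a highest weight vector; you still need a common eigenvector of the commutative algebra $\mathcal{B}(p,q)_0$ inside the finite-dimensional, $\mathcal{B}(p,q)_0$-invariant space $\{v\in V_\mu:\mathcal{B}(p,q)_+v=0\}$ (easy, but it should be said). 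And deducing part (b) by ``reversing the sign of the grading'' requires the observation that $L_{\alpha,i}\mapsto -L_{-\alpha,i}$ is an isomorphism $\mathcal{B}(p,q)\to\mathcal{B}(-p,q)$ rather than an automorphism of $\mathcal{B}(p,q)$ (for $p\neq0$ there is no grade-reversing automorphism); this is harmless since $p$ ranges over $\mathbb{C}^*$, but it needs to be stated.
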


{\bf The proof of Theorem \ref{theo}:}
\begin{proof}
Assume that $V=\oplus_{\mu\in\mathbb{Z}}V_{\mu}$ is a quasifinite irreducible $\mathcal{B}(p,q)$-module without
highest and lowest weight vectors. We should prove that
\begin{equation}
\dim V_{\mu}\leq
\left\{ \begin{array}{ll}
3\dim V_{0}+\dim V_{1}, & \ {\mbox if}\ \mu\leq-2,\\
\\
\\
3 \dim V_{0}+\dim V_{-1}, & \ {\mbox if}\ \mu\geq 2.
\end{array}
\right.
\end{equation}
For fixed $\mu_{0}\leq-2$, we claim that the following linear map is injective:
$$\theta_{\mu_{0}}=(L_{-\mu_{0},0}\oplus L_{-\mu_{0},1}\oplus L_{-\mu_{0},2}\oplus L_{-\mu_{0}+1,0})|_{V_{\mu_{0}}}:V_{\mu_{0}}\rightarrow V_{0}\oplus V_{0}\oplus V_{0}\oplus V_{1}.$$
Otherwise there exists $0\neq v_{0}\in V_{\mu_{0}}$, such that $\theta_{\mu_{0}}(v_{0})=0$,
which implies that $L_{-\mu_{0},0},L_{-\mu_{0},1}L_{-\mu_{0},2},L_{-\mu_{0}+1,0}$
take $v_{0}$ to zero. On the other hand, $(a_{0}c|_{V_{\mu_{0}}}+a_{1}L_{0,0}|_{V_{\mu_{0}}}+\cdots +a_{m-1}L_{0,m-1}|_{V_{\mu_{0}}})v_{\mu_{0}}=0$ for
$m\geq p_{0}$ by (\ref{100}).  Hence, by definition of $L(p,q,\mu_{o})$,
\begin{equation}\label{24}
L(p,q,\mu_{0})v_{\mu_{0}}=0.
\end{equation}
Applying Lemma \ref{lemm21}(\ref{42a}), for any $1\leq m\leq p_{0}$, there exists some positive integer $\alpha_{m}$
such that $L_{\alpha,m-1}\in L(p,q,\mu_{0})$ for $\alpha\geq\alpha_{m}$.
Denote $\Gamma=\max \{\alpha_{1},\alpha_{2}\cdots\cdots\alpha_{p_{0}-1}\}$.
Then $L_{\alpha,m-1}\in L(p,q,\mu_{0})$  for $1\leq m<p_{0},\alpha\geq\Gamma$.
 Furthermore for $m\geq p_{0},\alpha\geq\Gamma$, we have
\begin{equation}\label{29}
[L_{\alpha,0},L_{0,m-1}]=(-(p+\alpha)m+p+\alpha-q\alpha)L_{\alpha,m-1}\in L(p,q,\mu_{0}).
\end{equation}
We claim the coefficient of $L_{\alpha,m-1}$ in (\ref{29}) is nonzero. Otherwise $-(p+\alpha)m+p+\alpha-q\alpha=0$,
which review as the polynomial on $m$ and so that $p+\alpha=0,p+\alpha-q\alpha=0$.
 Taking  $m=p_{0}+i$ in (\ref{29}), we have $m\geq p_{0}$ by $i\in\mathbb{Z_{+}}$. This, yields that $L_{\alpha,p_{0}+i-1}\in L(p,q,\mu_{0})$.
So, $pq=0$, which is a contradiction to $p,q\in\mathbb{C}^{*}$. The claim is proved, and so that
 $L_{\alpha,m-1}\in L(p,q,\mu_{0})$ for $m\geq1,\alpha\geq\Gamma$, namely,
\begin{equation}\label{210}
\mathcal{B}(p,q)_{[\Gamma,+\infty)}\subseteq L(p,q,\mu_{0}).
\end{equation}
Due to (\ref{24}) and (\ref{210}), $\mathcal{B}(p,q)_{[\Gamma,+\infty)}=0$.
By Lemma \ref{lem23}(\ref{43a}), $V$ has a highest weigh vector, which contradicts our assumption. Thus the map of
$\theta_{\mu_{0}}$ is injective, which implies  $\dim V_{\mu}\leq 3 \dim V_{0}+\dim V_{1}$ if $\mu\leq-2$.
 Similarly, one can derive $\dim V_{\mu}\leq  3 \dim V_{0}+\dim V_{-1}$  if $\mu\geq 2$ by Lemmas \ref{lemm21}(\ref{42b}) and \ref{lem23}(\ref{42b}).
Denote $N=\max \{3\dim V_{0}+\dim V_{1},3\dim V_{0}+\dim V_{-1}\}$. Thus $dimV_{\mu}\leq N$ for all $\mu\in\mathbb{Z}$, namely $V$ is a uniformly bounded $\mathcal{B}(p,q)$-module. The proof is completed.
\end{proof}

\section{ACKNOWLEDGMENTS}
This work is supported in part by National Natural Science Foundation of China(Grant No. 11171294), Natural Science
Foundation of Heilongjiang Province of China (Grant No. A2015007), the fund of Heilongjiang Education Committee (Grant No. 12531483).

\end{document}